\newtheorem{prop}{Proposition}[section]
\newtheorem{theorem}[prop]{Theorem}
\newtheorem{cor}[prop]{Corollary}
\newtheorem{lemma}[prop]{Lemma}
\theoremstyle{definition}
\newtheorem{Def}[prop]{Definition}
\newtheorem{remark}[prop]{Remark}
\numberwithin{equation}{section}
\newcommand{\R}{\mathbb R}
\renewcommand{\leq}{\leqslant}
\def\Dx{\Delta_x}
\def\Cal{\mathcal}
\def\({\left(}
\def\){\right)}
\def\Nx{\nabla_x}
\def\Dt{\partial_t}
\def\dist{\operatorname{dist}}
\def\eb{\varepsilon}
\def\dist{\operatorname{dist}}
\def\ext{\operatorname{Ext}}
\def\Bbb{\mathbb}
\begin{document}

\title[Quintic wave equation]{Smooth attractors for the quintic wave equations with fractional damping}
\author[] {Anton Savostianov and Sergey Zelik}

\begin{abstract}  Dissipative wave equations with critical quintic nonlinearity and damping term involving the fractional Laplacian are considered. The  additional regularity of  energy solutions is established by constructing  the new  Lyapunov-type functional  and based on this, the global well-posedness  and dissipativity of the energy solutions as well as the existence of a smooth global and exponential attractors of finite Hausdorff and fractal dimension is verified.
\end{abstract}

\subjclass[2000]{35B40, 35B45}
\keywords{damped wave equation, fractional damping, critical nonlinearity, global attractor, smoothness}
\thanks{
This work is partially supported by the Russian Ministry of Education and Science (contract no.
8502).}
\address{University of Surrey, Department of Mathematics, \newline
Guildford, GU2 7XH, United Kingdom.}

\email{s.zelik@surrey.ac.uk}
\email{a.savostianov@surrey.ac.uk}

\maketitle
\tableofcontents
\section{Introduction}
Wave equations with the damping term involving the fractional Laplacian
\begin{equation}\label{0.eqmain}
\Dt^2 u+\gamma(-\Dx)^\theta\Dt u-\Dx u+f(u)=0,\  \ x\in\Omega\subset\R^n,
\end{equation}
where $\gamma>0$, $\theta\in[0,2]$ and $f(u)$ is a nonlinear interaction function, are of big permanent interest. These equations model various oscillatory processes in a lossy media including the nonlinear elasticity, electrodynamics, quantum mechanics, etc. We mention here also  the relatively recent applications to the acoustic waves propagation in viscous/viscoelastic  media. Then, the coefficient $\theta$ in the non-local term of the equation is related with the power law for the dependence of the acoustic attenuation $a(\omega)$ on the angular frequency $\omega$, namely,
\begin{equation}
a(\omega)=a_0|\omega|^{2\theta},
\end{equation}
see \cite{tree, chen} and references therein. The classical choices of the exponent $\theta$ are $\theta=0$ and
$\theta=1$ which correspond to the usual damped wave equations and the so-called strongly damped wave equations respectively although an increasing attention is attracted for the general case $\theta\in(0,2)$ as well, see for instance \cite{pri} and the literature cited therein.
\par
Mathematical properties of equation \eqref{0.eqmain} and the related equations, including the well-posedness, regularity and the asymptotic behavior of solutions have been  studied in many papers, see \cite{ACH,BV,carc1,carc2,carc3,tri1,tri2,bk_ChuLas2010,Chu2010,GM,K86,KZwvEq2009,mossad,MZDafer2008,PataZel2006 rem,PataZel2006,TemamDS,We} and references therein. These properties depend strongly on the value of $\theta$ and on the growth rate of the nonlinearity $f$. For instance, equation \eqref{0.eqmain} is {\it hyperbolic} when $\theta=0$ and (under natural assumptions on $f$, e.g., for $f=0$) and is solvable forward and backward in time, so the solution operators $S(t)$ generate a $C_0$-group in the proper phase space and do not possess any smoothing property on finite time, only the asymptotic smoothing as $t\to\infty$ holds, see e.g., \cite{TemamDS}.
\par
 In the case, $0<\theta<\frac12$, equation \eqref{0.eqmain} generates the so-called $C^\infty$-semigroup, so it is already solvable only forward in time and possesses the smoothing property on finite time (even starting from the non-smooth initial data, the solution becomes smooth for $t>0$). However, this smoothing is not strong enough to give the maximal regularity and the solution semigroup is not {\it analytic}, see \cite{tri1}. In contrast to that, the solution semigroup is {\it analytic} for $\frac12\le\theta<1$ and equation \eqref{0.eqmain} is actually {\it parabolic}, see \cite{tri2}. For instance, in the borderline case $\theta=1/2$ and $\gamma=2$ the change of variable $v=\Dt u+(-\Dx)^{1/2}u$ transforms \eqref{0.eqmain} to the following system:
\begin{equation}\label{0.par}
\begin{cases}
\Dt u+(-\Dx)^{1/2}u=v,\\
\Dt v+(-\Dx)^{1/2}v=f(u)
\end{cases}
\end{equation}
and the parabolicity is obvious. For general $\gamma\ne2$, the change of variable is $v=\Dt u+\beta(-\Dx)^{1/2}u$ where $\beta^2-\gamma\beta+1=0$, so $\beta$ will be complex if $\gamma<2$ which does not destroy the parabolicity, but just reflects the fact that the eigenvalues of the problems are complex conjugate if $\gamma<2$.
\par
Finally, when $\theta=1$, the solution semigroup remains analytic, but nevertheless equation \eqref{0.eqmain} is {\it not parabolic} and  possesses
only partial smoothing on a finite time, see \cite{KZwvEq2009,PataZel2006} for more details.
\par
Let us now discuss the dependence of \eqref{0.eqmain} on the growth rate of the nonlinearity $f$. For simplicity, we restrict ourselves to consider only the case where $\Omega$ is a bounded domain in $\R^3$ with proper boundary conditions and the nonlinearity $f$ satisfies the following natural assumption:
\begin{equation}\label{0.f}
-C+\kappa|u|^q\le f'(u)\le C(1+|u|^q), \ \ q>0,
\end{equation}
where $\kappa,C>0$ are some constants.
For instance, polynomials of odd degree and positive leading coefficient clearly satisfy this assumption. Thus, the growth rate of $f(u)$ when
$u\to\infty$ is determined by the growth exponent $q$. Note that \eqref{0.eqmain} possesses the following {\it energy} equality at least on a formal level:
\begin{equation}\label{0.energy}
\frac d{dt}\(\frac12\|\Dt u\|^2_{L^2(\Omega)}+\frac12\|\Nx u\|^2_{L^2(\Omega)}+(F(u),1)\)=-\gamma\|(-\Dx)^{\theta/2}\Dt u\|^2_{L^2(\Omega)},
\end{equation}
where $F(s)=\int_0^sf(v)\,dv$ is the potential of the non-linearity $f$ and $(u,v)$ stands for the scalar product of functions $u$ and $v$ in $H:=L^2(\Omega)$. By this reason, the {\it energy} solutions are defined in such way that all terms in that inequality have sense (at least after  integration in time), namely, for the case of Dirichlet boundary conditions, the following regularity of energy solutions is assumed:
\begin{multline}\label{0.ensol}
(u(t),\Dt u(t))\in\Cal E:=[H^1_0(\Omega)\cap L^{q+2}(\Omega)]\times L^2(\Omega),\\ (-\Dx)^{\theta/2}\Dt u\in L^2([0,T],L^2(\Omega)).
\end{multline}
Here and below, $H^s(\Omega)$ denotes the Sobolev space of distributions whose derivatives up to order $s$ belong to $L^2(\Omega)$ and $H_0^s$ is the closure of $C^\infty_0(\Omega)$ in the metric of $H^s(\Omega)$.
\par
 As usual, for sufficiently small $q$ which is less than the {\it critical} exponent $q_*=q_*(\theta)$ (the so-called {\it sub-critical} case), the non-linearity is subordinated to the linear part of the equation, so the analytic properties of it remains similar to the linear case $f=0$. In particular, we have the existence, uniqueness and dissipativity of the energy solutions in an almost straightforward way. However, nothing similar works in the {\it super-critical} case $q>q_*(\theta)$ where usually only the existence of a weak solution without the uniqueness and further regularity is known. As expected, equations with super-critical non-linearities may generate singularities in finite time even staring with smooth initial data (similarly, e.g., the so-called self-focusing phenomenon in the  non-linear Schr\"odingier equation or the gravitational collapse in general relativity) and this, in particular, may cause also the non-uniqueness. Unfortunately, to the best of our knowledge, no such examples are known for equations \eqref{0.eqmain} with dissipative non-linearities $f$ satisfying \eqref{0.f}, so being pedantic, the precise value of the critical exponent $q_*(\theta)$ is not known. Moreover, it is a priori not excluded that smooth solutions of the concrete
equation \eqref{0.eqmain} do not blow up in finite time, for instance, due to the presence of some non-trivial extra Lyapunov type functionals controlling the higher norms of the solution, then $q_*(\theta)=\infty$ and energy solutions of \eqref{0.eqmain} are well-posed and dissipative no matter how fast is the growth of the non-linearity $f$.
\par
As shown in \cite{KZwvEq2009}, see also \cite{PataZel2006} that is indeed the case when $\theta=1$, so the blow up of smooth solutions is impossible and the uniqueness theorem holds for the energy solutions which are asymptotically smooth as $t\to\infty$ no matter how fast the non-linearity $f$ is growing. It is also indicated  there that
\begin{equation}
q_*(\theta)=\infty,\ \ \theta\in[\frac34,1],
\end{equation}
so the critical growth exponent does not exist for $\theta\ge\frac34$ as well. In the case $\theta\in(\frac12,\frac34)$, the result of \cite{KZwvEq2009} gives the estimate
\begin{equation}
q_*(\theta)\ge \frac{8\theta}{3-4\theta},\ \ \theta\in(\frac12,\frac34).
\end{equation}
The non-parabolic case $\theta\in[0,\frac12)$ is more delicate. In the purely hyperbolic case $\theta=0$ the assumption $q\le2$ (at most cubic growth rate of the nonlinearity is allowed) is posed in the most part of works in bounded domains, see \cite{BV,TemamDS,Zel2004} and references therein, although the quintic growth rate ($q_*(0)=4$) is expected here as critical. This conjecture  is partially verified in \cite{stri} (based on the recent progress in Strihartz-type estimates for bounded domains), where the global well-posedness of weak solutions has been proved for the case of quintic hyperbolic equation. To the best of our knowledge, the corresponding attractor theory for that equation in bounded domains is not developed yet.
\par
In the other borderline case $\theta=\frac12$, it is also expected that the critical exponent $q_*(1/2)=4$, so the natural conjecture here is
$$
q_*(\theta)=4,\ \ \theta\in[0,\frac12].
$$
The sub-critical case $q<4$ when $\theta=1/2$ is completely understood now-a-days, see \cite{carc1,carc2,carc3} and references therein. Some results on the well-posedness of this equation in the critical quintic case $q=4$ are also obtained there based on the so-called mild solutions. However, to the best of our knowledge, the analog of the so-called non-concentration effect which is typical for the quintic wave equations (with $\theta=0$), see \cite{stri,noncon}, is not known in the case when $\theta=\frac12$, so these results are not very helpful for proving the absence of blow up of smooth solutions and/or building up a reasonable attractor theory. Thus, clarifying the situation with the quintic nonlinearity and $\theta=\frac12$ was a long-standing open problem.
\par
The aim of this paper is to give a solution of  this problem. Namely, we will show that the energy solution of \eqref{0.eqmain} in 3D bounded domain with $\theta=\frac12$ and quintic non-linearity is unique and is smooth (say, $u(t)\in H^2(\Omega)$) for all $t>0$. Moreover, the associated semigroup in the energy phase space $\Cal E=H^1_0(\Omega)\times L^2(\Omega)$ is dissipative and possesses a smooth global attractor of finite Hausdorff and fractal dimension. To prove this result, we show that {\it any} energy solution possesses the following  extra regularity:
\begin{equation}\label{0.extra}
u\in L^2([0,T],H^{3/2}(\Omega))
\end{equation}
which is enough to verify the above stated properties in a more or less standard way.
\par
Note that the regularity \eqref{0.extra} does not follow from the energy equality \eqref{0.energy} and, analogously to \cite{KZwvEq2009}, some extra
Lyapunov-type functionals are necessary for obtaining it. Namely, as not difficult to see that the desired estimated will be obtained if we would succeed to multiply equation \eqref{0.eqmain} on $(-\Dx)^{1/2}u$ (or which is the same, to multiply the second equation of \eqref{0.par} by $v$) and estimate the non-linear term with fractional Laplacian: $(f(u),(-\Dx)^{1/2}u)$. To estimate this term, we utilize the well-known formula
$$
\|u\|_{H^s(\Omega)}^2\sim \|u\|_{L^2(\Omega)}^2+\int_{x\in\Omega}\int_{y\in\Omega}\frac{|u(x)-u(y)|^2}{|x-y|^{3+2s}}\,dx\,dy,\ \ s\in(0,1),
$$
see, e.g., \cite{triebel}. In particular, for more simple case of {\it periodic} boundary conditions $\Omega=\Bbb T^3:=[0,2\pi]^3$, we establish the following representation:
\begin{multline}\label{0.good}
(f(u),(-\Dx)^su)=\\=c_s\int_{h\in\R^3}\int_{x\in\Bbb T^3}\frac{(f(u(x+h))-f(u(x)),u(x+h)-u(x))}{|h|^{3+2s}}\,dx\,dh,
\end{multline}
where $c_s>0$ is some positive constant which is independent of $f$ and $u$. This estimate together with the left inequality of \eqref{0.f} imply that
$$
(f(u),(-\Dx)^{1/2}u)\ge -C\|u\|_{H^{1/2}(\Omega)}^2
$$
and this is enough to verify \eqref{0.extra}, see Sections \ref{s1} for the details.
\par
The case of a general bounded domain $\Omega\subset\R^3$ endowed by the Dirichlet boundary conditions is a bit more difficult since we do not know the analogue of \eqref{0.good} for that case and have to proceed in a different way using the {\it odd} extension of the solution $u$ through the boundary. Then, the already obtained energy estimate occurs to be sufficient for estimating the extra terms arising under the extension, so with the help of this trick, we actually reduce the general case to the case of periodic boundary conditions considered before, see Section \ref{s6}.
\par
The paper is organized as follows.
\par
The definitions of functional spaces as well as assumptions on the non-linearity and external forces used throughout of the paper are given in Section \ref{s1}. Moreover, the key technical tool \eqref{0.good} is verified here for the case of periodic boundary conditions.
\par
The weak energy solutions are introduced and studied in Section \ref{s2}. In particular, extra regularity \eqref{0.extra} is verified here in the case of periodic boundary conditions. Moreover, as shown there, that extra regularity is enough to verify that any energy solution satisfies the energy {\it equality} \eqref{0.energy}.
\par
The well-posedness and smoothing property for the energy solutions are verified in Section \ref{s4} under the assumption \eqref{0.extra} of extra regularity. The existence of global and exponential attractors for the associated solution semigroup are proved in Section \ref{s5}.
\par
Finally, the case of Dirichlet boundary conditions is considered in Section \ref{s6}. In that section, we verify the extra regularity \eqref{0.extra} for the Dirichlet case  using the odd extension of a solution through boundary. Important properties of such extension operator are collected
in Appendix \ref{s7}.
\par
 To conclude, we note that the methods developed in the paper work not only for the borderline case $\theta=\frac12$ and not only for quintic nonlinearities. We give the further applications in the forthcoming paper.

 \section{Preliminaries}\label{s1}
 In that section, we briefly remind the properties of fractional Sobolev spaces related with our problem and verify the key formula \eqref{0.good}. In order to be able to consider the cases of Dirichlet and periodic boundary conditions from the unified point of view, we add the extra damping term $\alpha\Dt u$ to equation  \eqref{0.eqmain} and will consider the following problem:
\begin{equation}\label{OriginalSys}
\begin{cases}
\Dt^2u+\gamma(-\Dx)^\frac{1}{2} \Dt u+\alpha\Dt u-\Dx u+f(u)=g,\\
u\big|_{t=0}=u_0,\quad \Dt u\big|_{t=0}=u_1,
\end{cases}
\end{equation}
either in a bounded smooth domain $\Omega\subset\R^3$ with Dirichlet boundary conditions or on a torus $\Omega=\Bbb T^3:=[0,2\pi]^3$ with periodic boundary conditions. Here $\Dx$ is a Laplacian with respect to the variable $x$, $u=u(t,x)$ is an unknown function, $\gamma$ and $\alpha$ are fixed strictly positive numbers, $g\in L^2(\Omega)$ and $f\in C^1(\R)$ satisfies the following growth and dissipativity assumptions:
\begin{equation}
\label{1.f}
\begin{cases}
1.\ f(u)u\ge-C+\kappa |u|^2,\\
2.\ f'(u)\ge -K,\\
3.\ |f'(u)|\le C(1+|u|^4),
\end{cases}
\end{equation}
where $C$, $\kappa$, and $K$ are given  positive constants. Finally, the initial data $\xi_u(0):=(u_0,u_1)$ is assumed belonging to the energy space
$$
\Cal E:=H^1_0(\Omega)\times L^2(\Omega).
$$
Here and below $H^s(\Omega)$, $s\in\R$, stands for the classical Sobolev spaces of distributions whose derivatives up to order $s$ belong to $L^2(\Omega)$ and $H^s_0(\Omega)$ means the closure of $C^\infty_0(\Omega)$ in the metric of $H^s(\Omega)$. Recall that, for the non-integer positive values of $s$ the norm in the space $H^s(\Omega)$ is defined via the  interpolation:
\begin{equation}\label{1.frac}
\|u\|_{H^s}^2:=\|u\|_{H^{[s]}}^2+\int_{x,y\in\Omega}\frac{|D^{[s]}u(x)-D^{[s]}u(y)|^2}{|x-y|^{3+2(s-[s])}}\,dx\,dy,
\end{equation}
where $[s]$ is the integer part of $s$ and $D^{[s]}$ stands for the collection of all partial derivatives of order $[s]$. For negative $s$, the space $H^s(\Omega)$ is defined by the duality: $H^{s}(\Omega):=[H^{-s}_0(\Omega)]^*$, see e.g., \cite{triebel} for the details. The standard inner product in $L^2(\Omega)$ will be denoted by $(u,v)$.
\par
We now remind the fractional powers of the Laplacian. To this end, let $0\le\lambda_1\le\lambda_2\le \cdots$ be the eigenvalues of the Laplacian $-\Dx$ and $\{e_i\}_{i=1}^\infty\subset C^\infty(\Omega)$ be the associated eigenvectors. Then, due to the Parseval equality,
$$
\|u\|^2_{L^2(\Omega)}=\sum_{i=1}^\infty u_i^2,\ \ u_i:=(u,e_i),\ \ u=\sum_{i=1}^\infty u_i e_i.
$$
The scale of Hilbert spaces $H^s_{\Delta}$, $s\in\R$, associated with the Laplacian is defined as a completion of $L^2(\Omega)$ with respect to the following norm:
\begin{equation}\label{1.hsd}
\|u\|^2_{H^s_{\Delta}}:=\sum_{i=1}^\infty(1+\lambda_i)^s u_i^2,\ \  u:=\sum_{i=1}^\infty u_i e_i
\end{equation}
and the fractional Laplacian $(-\Dx)^\theta$, $\theta\ge0$, acts from $H^s_{\Delta}$ to $H^{s-2\theta}_{\Delta}$ via the following expression:
\begin{equation}\label{1.flap}
(-\Dx)^\theta u:=\sum_{i=1}^\infty\lambda_i^\theta u_i e_i,\ \ \ u=\sum_{i=1}^\infty u_ie_i.
\end{equation}
Recall also that, in the case of Dirichlet boundary conditions, $\lambda_1>0$ and therefore $(-\Dx)^{\theta}$ is an isomorphism between $H^s_{\Delta}$ and $H^{s-2\theta}_{\Delta}$ for all $\theta,s\in\R$. In the case of periodic boundary conditions $\lambda_1=0$, so the operator $(-\Dx)^{\theta}$ is not invertible and one should replace it by $(-\Dx+1)^{\theta}$ in order to restore the isomorphism.
\par
The relations between the Sobolev spaces $H^s(\Omega)$ and the spaces $H^s_{\Delta}$ in the case of Dirichlet boundary conditions are well-known at least in the case of smooth domains. Namely, for $s>0$ and $s\ne 1/2,5/2,9/2,\cdots$ one has
\begin{equation}\label{1.fgood}
H^s_\Delta=H^s(\Omega)\cap \{u\big|_{\partial\Omega}=-\Dx u\big|_{\partial\Omega}=\cdots=(-\Dx)^{[(s-1)/2]} u\big|_{\partial\Omega}=0\}
\end{equation}
and, in particular, by duality, $H^{s}_\Delta=H^s(\Omega)$ for $-3/2\le s\le0$, $s\ne-1/2$. In contrast to that, in the case of, say, $s=1/2$, the space $H^{1/2}_\Delta$ is a proper (dense) subset of $H^{1/2}(\Omega)=H^{1/2}_0(\Omega)$ determined by the following norm:
\begin{equation}\label{1.fbad}
\|u\|^2_{H^{1/2}_\Delta}\sim \|u\|^2_{H^{1/2}(\Omega)}+\int_\Omega d(x)^{-1}|u(x)|^2\,dx<\infty,
\end{equation}
where $d(x)$ is a distance from $x$ to the boundary $\partial\Omega$ and, by duality,
$$
H^{-1/2}(\Omega)\subset H^{-1/2}_\Delta\subset D'(\Omega),
$$
see \cite{LionMag} or \cite{triebel} for more details.
\par
We now consider the special case $\Omega=\Bbb T^3$ with periodic boundary conditions. In that case, since there is actually no boundary, we have the equality
$$
H^s(\Omega)=H^s_\Delta
$$
for all $s\in\R$. Moreover, the eigenvectors of the Laplacian $(-\Dx)$ are now the complex exponents $e^{ik.x}$,  $k\in\Bbb Z^3$ (here and below $a.b$ stands for the usual dot product in $\R^3$), with the corresponding eigenvalues $|k|^2$, so the Parseval equality  reads
\begin{equation}\label{1.parseval}
\|u\|^2_{L^2(\Omega)}=(2\pi)^3\sum_{k\in\Bbb Z^3}|u_k|^2,\ \ \ u(x)=\sum_{k\in\Bbb Z^3}u_ke^{ik.x},\ \ u_k:=\frac1{(2\pi)^3}(u,e^{ik.x})
\end{equation}
and also, for $s\ge0$,
\begin{equation}
\label{eq norm |u|+|Ds/2 u|}
d_1(\|u\|^2_{L^2}+\|(-\Delta)^\frac{s}{2} u\|^2_{L^2})\leq\|u\|^2_{H^s}\leq d_2(\|u\|^2_{L^2}+\|(-\Delta)^\frac{s}{2} u\|^2_{L^2}),
\end{equation}
where $d_1$ and $d_2$ are some positive constants depending on $s$.
\par
The next representation of the norm $\|(-\Dx)^{s/2}u\|^2_{L^2}$ is crucial for what follows.
\begin{lemma}\label{Lem1.nice}Let $s\in (0,1)$ and $u\in H^s(\Bbb T^3)$, then the following identity holds:
\begin{equation}
\label{1.eqnorms}
\|(-\Dx)^\frac{s}{2}u\|^2_{L^2}=c\int_{\mathbb{R}^3}\int_{\mathbb{T}^3}\frac{|u(x+h)-u(x)|^2}{|h|^{3+2s}}dxdh,
\end{equation}
for some strictly positive $c=c_s$ which depends only on $s$.
\end{lemma}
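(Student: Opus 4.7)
The plan is to expand both sides of \eqref{1.eqnorms} in the Fourier basis $\{e^{ik.x}\}_{k\in\Bbb Z^3}$ and match the resulting series up to a single constant depending only on $s$. By \eqref{1.flap} and \eqref{1.parseval},
\[
\|(-\Dx)^{s/2}u\|^2_{L^2}=(2\pi)^3\sum_{k\in\Bbb Z^3}|k|^{2s}|u_k|^2.
\]
For the right-hand side, for each fixed $h\in\R^3$ the translate $u(\cdot+h)-u$ has Fourier coefficients $u_k(e^{ik.h}-1)$, so Parseval applied in the variable $x$ gives
\[
\int_{\Bbb T^3}|u(x+h)-u(x)|^2\,dx=(2\pi)^3\sum_{k\in\Bbb Z^3}|u_k|^2\,|e^{ik.h}-1|^2.
\]
Since the summand is nonnegative, Tonelli's theorem allows me to exchange the sum with the $dh$-integral, so the right-hand side of \eqref{1.eqnorms} equals $(2\pi)^3\sum_k|u_k|^2\,I(k)$, where
\[
I(k):=\int_{\R^3}\frac{|e^{ik.h}-1|^2}{|h|^{3+2s}}\,dh.
\]

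The central step is to prove that $I(k)=c_s^{-1}|k|^{2s}$ for some $c_s>0$ depending only on $s$. For $k\ne 0$ I perform the change of variable $h=y/|k|$: writing $\omega:=k/|k|$ the exponent becomes $k.h=\omega.y$, the Jacobian contributes $|k|^{-3}$, and the weight $|h|^{-3-2s}$ contributes $|k|^{3+2s}$, so
\[
I(k)=|k|^{2s}\int_{\R^3}\frac{|e^{i\omega.y}-1|^2}{|y|^{3+2s}}\,dy.
\]
Rotational invariance of Lebesgue measure shows that this last integral does not depend on the unit vector $\omega$, so it equals a constant which I denote by $c_s^{-1}$. The hypothesis $s\in(0,1)$ is what makes this constant finite and strictly positive: near $y=0$ the estimate $|e^{i\omega.y}-1|^2\le|y|^2$ bounds the integrand by $|y|^{-1-2s}$, which is integrable in $\R^3$ iff $s<1$; at infinity the bound $|e^{i\omega.y}-1|^2\le 4$ gives decay $\le 4|y|^{-3-2s}$, integrable iff $s>0$. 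The case $k=0$ gives $I(0)=0=|k|^{2s}$, so the formula $I(k)=c_s^{-1}|k|^{2s}$ holds for every $k\in\Bbb Z^3$.

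Inserting this into the expression from the first paragraph yields
\[
\int_{\R^3}\int_{\Bbb T^3}\frac{|u(x+h)-u(x)|^2}{|h|^{3+2s}}\,dx\,dh=(2\pi)^3 c_s^{-1}\sum_{k\in\Bbb Z^3}|k|^{2s}|u_k|^2=c_s^{-1}\|(-\Dx)^{s/2}u\|^2_{L^2},
\]
which is \eqref{1.eqnorms} with $c=c_s>0$. The only genuinely subtle points are the Tonelli exchange of sum and integral, which is free because of nonnegativity, and the convergence of the constant-defining integral; both conditions $s<1$ and $s>0$ are essential, so this is where the restriction $s\in(0,1)$ enters. The Parseval computations themselves are routine.
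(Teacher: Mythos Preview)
Your proof is correct and follows essentially the same route as the paper: Parseval in $x$, then the scaling $h=y/|k|$ combined with rotational invariance to extract the factor $|k|^{2s}$ times a constant depending only on $s$. You are in fact slightly more careful than the paper, explicitly invoking Tonelli for the sum--integral exchange, treating the $k=0$ mode, and justifying both the small-$h$ and large-$h$ integrability that pin down the range $s\in(0,1)$.
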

\begin{proof}
 Indeed, let $u\in H^s(\mathbb{T}^3)$, where $s\in(0,1)$ and
\begin{equation}
u(x)=\sum_{k\in\mathbb{Z}^3}u_ke^{ik.x}.
\end{equation}
Then by Parseval equality,
\begin{multline*}
\int_{\mathbb{T}^3}(u(x+h)-u(x))^2dx=(2\pi)^3\sum_{k\in\mathbb{Z}^3}|u_k|^2|e^{ik.h}-1|^2=\\=
(2\pi)^3\sum_{k\in\mathbb{Z}^3}4|c_k|^2\sin^2(k.h/2).
\end{multline*}
Consequently,
\begin{multline}
\int_{\mathbb{R}^3}\int_{\mathbb{T}^3}\frac{(u(x+h)-u(x))^2}{|h|^{3+2s}}dxdh=32\pi^3\sum_{k\in\mathbb{Z}^3}|u_k|^2\int_{\mathbb{R}^3}\frac{\sin^2(k.h/2)}{|h|^{3+2s}}dh=\\
\left|h:=\frac{z}{|k|}\right|=32\pi^3\sum_{k\in\mathbb{Z}^3}|u_k|^2|k|^{2s}\int_{\mathbb{R}^3}\frac{\sin^2\left(\frac{k}{2|k|}.z\right)}{|z|^{3+2s}}dz.
\end{multline}
Due to the rotation invariance of $|z|$, we have
\begin{equation}
4\int_{\mathbb{R}^3}\frac{\sin^2\left(\frac{k}{2|k|}.z\right)}{|z|^{3+2s}}dz=4\int_{\mathbb{R}^3}\frac{\sin^2(y_1/2)}{|y|^{3+2s}}dy:=c^{-1}.
\end{equation}
Since $s\in(0,1)$ the last integral is finite and, consequently,
$$
\int_{\mathbb{R}^3}\int_{\mathbb{T}^3}\frac{(u(x+h)-u(x))^2}{|h|^{3+2s}}dxdh=c^{-1}(2\pi)^3\sum_{k\in\Bbb Z^3}|u_k|^2|k|^{2s}=c^{-1}\|(-\Dx)^{s/2}u\|^2_{L^2}.
$$
Thus, Lemma \ref{Lem1.nice} is proved.
\end{proof}
Using \eqref{1.eqnorms} together with the obvious identity
$$
a(u,v)=\frac14(a(u+v,u+v)-a(u-v,u-v))
$$
which holds for any bilinear form $a(u,v)$, we conclude that, for any $u,v\in H^s(\Bbb T^3)$,
\begin{multline}\label{1.bigood}
(v,(-\Dx)^{s}v)=((-\Dx)^{s/2}u,(-\Dx)^{s/2}v)=\\=c\int_{\mathbb{R}^3}\int_{\mathbb{T}^3}\frac{(u(x+h)-u(x))(v(x+h)-v(x))}{|h|^{3+2s}}dxdh.
\end{multline}
In particular, taking $v=f(u)$ in the last formula and using that, due to the second assumption of \eqref{1.f} and the integral mean value theorem,
\begin{multline*}
(f(u(x+h))-f(u(x)))(u(x+h)-u(x))=\\=\int_{0}^1f'(\kappa u(x+h)+(1-\kappa)u(x))\,d\kappa|u(x+h)-u(x)|^2\ge-K|u(x+h)-u(x)|^2,
\end{multline*}
we see that
\begin{equation}\label{1.ffrac}
(f(u),(-\Dx)^s u)\ge-K\|(-\Dx)^{s/2}u\|_{L^2(\Bbb T^3)}^2\ge-C\|u\|^2_{H^s(\Bbb T^3)}
\end{equation}
hold at least for sufficiently smooth functions $u$ for which the integrals in the left and right-hand sides of \eqref{1.ffrac} have sense and, therefore, the key estimate \eqref{0.good} is verified. To be able to apply this estimate to less regular functions $u$ for which the existence of that integrals is not known a priori, we introduce, for every $\eb>0$, the cut-off kernels
\begin{equation}\label{1.phi}
\theta_\eb(z):=\begin{cases} |z|,\  \ |z|\ge\eb,\\ \eb,\ \ |z|\le\eb\end{cases}
\end{equation}
and the associated bilinear forms
\begin{equation}\label{1.es}
[u,v]_{s,\eb}:=\int_{\R^3}\int_{\Bbb T^3}\frac{(u(x+h)-u(x))(v(x+h)-v(x))}{\theta_\eb(h)^{3+2s}}\,dx\,dh.
\end{equation}
Then, on the one hand, obviously,
\begin{equation}\label{1.ez}
[u,v]_{s,\eb}^2\le [u,u]_{s,\eb}[v,v]_{s,\eb},\ \ [u,u]_{s,\eb}\le c^{-1}\|(-\Dx)^{s/2}u\|^2_{L^2(\Bbb T^3)}\le C\|u\|^2_{H^s(\Bbb T^3)},
\end{equation}
where $C$ is independent of $\eb>0$. Moreover, analogously to \eqref{1.ffrac}, we have
\begin{equation}\label{1.ffrace}
[f(u),u]_{s,\eb}\ge -K[u,u]_{s,\eb}.
\end{equation}
On the other hand, since the kernel $\frac{1}{\theta_\eb(h)}$ is no more singular, the integrals in \eqref{1.ffrace} have sense, for instance if $u\in L^p(\Bbb T^3)$ is such that $f(u)\in L^q(\Bbb T^3)$ with $\frac1p+\frac1q=1$. In the next section, we will use this estimate in the situation when $p=6$.
\par
We conclude the section by one more obvious lemma which allows us to estimate the $H^s$-norms using the smoothed norms \eqref{1.es}.
\begin{lemma}\label{Lem1.Fatou} Let the function $u\in L^2(\Bbb T^3)$ be such that
$$
[u,u]_{s,\eb}\le C
$$
for all $\eb>0$, where $C$ is independent of $\eb$. Then, $u\in H^s(\Bbb T^3)$ and
\begin{equation}\label{1.hs}
\|(-\Dx)^{s/2}u\|^2_{L^2}\le c\liminf_{\eb\to0}[u,u]_{s,\eb}.
\end{equation}
\end{lemma}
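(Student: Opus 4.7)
The plan is to deduce the desired bound by monotone convergence, combined with a nonnegative version of the identity \eqref{1.eqnorms} that is valid for any $u\in L^2(\mathbb{T}^3)$ (with both sides possibly infinite).

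First I would observe the obvious monotonicity $\theta_\varepsilon(h)\leq\theta_{\varepsilon'}(h)$ for $\varepsilon'\leq \varepsilon$ and $\theta_\varepsilon(h)\to |h|$ as $\varepsilon\to 0$ for every $h\neq 0$. Consequently, the integrand
$$
\frac{(u(x+h)-u(x))^2}{\theta_\varepsilon(h)^{3+2s}}
$$
is nonnegative and increases pointwise to $(u(x+h)-u(x))^2/|h|^{3+2s}$ as $\varepsilon\downarrow 0$. The monotone convergence theorem (applied on $\mathbb{T}^3\times\R^3$) then gives
$$
\lim_{\varepsilon\to 0}[u,u]_{s,\varepsilon}=\int_{\R^3}\int_{\mathbb{T}^3}\frac{(u(x+h)-u(x))^2}{|h|^{3+2s}}\,dx\,dh=:I(u)\in[0,+\infty],
$$
and the hypothesis yields $I(u)\leq\liminf_{\varepsilon\to 0}[u,u]_{s,\varepsilon}\leq C<\infty$.

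Next I would upgrade formula \eqref{1.eqnorms} to general $u\in L^2(\mathbb{T}^3)$ in the form of the identity $c\cdot I(u)=\|(-\Delta)^{s/2}u\|_{L^2}^2$, where both sides are allowed to take the value $+\infty$. Writing $u=\sum_{k\in\Bbb Z^3}u_k e^{ik.x}$ in $L^2$, the shift $u(\cdot+h)-u(\cdot)$ has Fourier coefficients $u_k(e^{ik.h}-1)$, so Parseval gives, for every fixed $h$,
$$
\int_{\mathbb{T}^3}(u(x+h)-u(x))^2\,dx=4(2\pi)^3\sum_{k\in\Bbb Z^3}|u_k|^2\sin^2(k.h/2).
$$
Since all terms are nonnegative, Tonelli allows me to interchange the sum and the $h$-integral, and the same rescaling argument as in the proof of Lemma \ref{Lem1.nice} evaluates $\int_{\R^3}\sin^2(k.h/2)|h|^{-(3+2s)}\,dh=|k|^{2s}/(4c)$ for $k\neq 0$ (and $0$ for $k=0$). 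This yields $I(u)=c^{-1}(2\pi)^3\sum_k|u_k|^2|k|^{2s}=c^{-1}\|(-\Delta)^{s/2}u\|_{L^2}^2$, the equality holding in $[0,+\infty]$.

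Combining the two steps, $\|(-\Delta)^{s/2}u\|_{L^2}^2=c\,I(u)\leq c\liminf_{\varepsilon\to 0}[u,u]_{s,\varepsilon}<\infty$, so $(-\Delta)^{s/2}u\in L^2$ and, together with $u\in L^2$ and \eqref{eq norm |u|+|Ds/2 u|}, this places $u$ in $H^s(\mathbb{T}^3)$ and delivers \eqref{1.hs}. The only subtle point is the justification of the Parseval/Tonelli identity without assuming a priori that $u\in H^s$; this is handled cleanly because every quantity in sight is nonnegative, so no integrability has to be verified before interchanging limits.
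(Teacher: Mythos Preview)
Your argument is correct and follows exactly the route the paper indicates in one line (``monotone increasing and the Fatou lemma''): you use the pointwise monotonicity of the cut-off kernels together with monotone convergence/Fatou, and you spell out explicitly---via Tonelli on nonnegative terms---why the identity \eqref{1.eqnorms} extends to arbitrary $u\in L^2(\mathbb T^3)$ with both sides in $[0,+\infty]$, which the paper leaves implicit. One small slip: the inequality should read $\theta_{\varepsilon'}(h)\le\theta_\varepsilon(h)$ for $\varepsilon'\le\varepsilon$ (the opposite of what you wrote), though your subsequent conclusion that the integrand increases as $\varepsilon\downarrow 0$ is the correct one.
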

Indeed, the assertion of the lemma is an immediate corollary of the fact that $\eb\to\phi_\eb(h)$ is monotone increasing and the Fatou lemma.

\section{Energy solutions}\label{s2}

The aim of this section is to introduce and study the weak energy solutions for problem \eqref{OriginalSys}. In particular, as will be shown below, any such solution possesses an extra space-time regularity at least in the periodic case (the case of Dirichlet boundary conditions will be considered later in Section \ref{s6}). This extra regularity  will be essentially used later in order to verify existence, uniqueness and dissipativity of energy solutions.
\par
We start with recalling the basic energy equality for problem \eqref{OriginalSys}. Indeed, multiplying formally \eqref{OriginalSys} by $\partial_tu$ and integrating in $x\in\Omega$, after integration by parts (using the Dirichlet or periodic boundary conditions), we get
\begin{equation}\label{2.eneq}
\frac d{dt} E(u(t),\partial_t u(t))+\alpha\|\Dt u(t)\|^2_{L^2(\Omega)}+\gamma\|(-\Dx)^{1/4}\Dt u(t)\|^2_{L^2(\Omega)}=0,
\end{equation}
where
\begin{equation}\label{2.energy}
E(u,v)=\frac{1}{2}\|v\|^2_{L^2(\Omega)}++\frac{1}{2}\|\nabla u\|^2_{L^2(\Omega)}+(F(u),1)-(g,u)
\end{equation}
and $F(u):=\int_0^uf(s)\,ds$. Integrating \eqref{2.eneq} in time, we formally have
\begin{multline}\label{2.enint}
E(u(t),\Dt u(t))+\\+\int_0^t\alpha\|\Dt u(s)\|^2_{L^2(\Omega)}+\gamma\|(-\Dx)^{1/4}\Dt u(s)\|^2_{L^2(\Omega)}\,ds=E(u(0),\Dt u(0)).
\end{multline}
As usual, weak energy solutions are expected to have minimal regularity which guarantees that all terms in \eqref{2.enint} are well defined.
Note that, due to the growth restriction of \eqref{1.f},
$$
|F(u)|\le C(1+|u|^6)
$$
which together with the embedding $H^1\subset L^6$ show that the energy functional \eqref{2.energy} is naturally well-defined and continuous on the energy space $(u,v)\in\Cal E$. This justifies the following definition.

\begin{Def}\label{Def2.ensol} A function $\xi_u:=(u,\Dt u)$ is a weak energy solution of problem \eqref{OriginalSys} if
\begin{equation}\label{2.space}
\xi_u\in L^\infty(0,T;\Cal E),\ \ \Dt u\in L^2(0,T;H^{1/2}_\Delta),
\end{equation}
$\xi_u(0)=(u_0,u_1)$ and equation \eqref{OriginalSys} is satisfied in the sense of distributions. The latter means that,
for any $T>0$ and any $\phi\in C^\infty_0((0,T)\times\Omega)$,
\begin{multline}
\label{1.des}
-\int_0^T(\Dt u,\Dt\phi)dt+\int_0^T(\nabla u,\nabla \phi)dt+\alpha\int_0^T(u_t,\phi)dt\\
\gamma\int_0^T(\Dt u,(-\Dx)^{1/2}\phi)\,dt+
\int_0^T(f(u),\phi)dt=\int_0^T(g,\phi)\,dt.
\end{multline}
\end{Def}
Since $\xi_u(t)$ is a priori only in $L^\infty(0,T;\Cal E)$, the equality $\xi_u(0)=(u_0,u_1)$ requires some explanations. First, since $u,u_t\in L^\infty(0,T;L^2)$, we conclude that
$$
u\in C([0,T],L^2(\Omega))
$$
and the initial value for $u(t)$ has a sense. To verify the continuity in time of $\Dt u(t)$, we need the information on $\Dt^2 u(t)$. To this end,
we note that, using the growth restriction \eqref{1.f} on $f$ and Sobolev embedding $H^1\subset L^6$ one can easily verify that
$$
f(u)\in L^\infty(0,T;H^{-1}(\Omega))
$$
and then from \eqref{1.des}, we conclude that

\begin{equation}
\label{1.uttinf}
\Dt^2 u\in L^\infty(0,T;H^{-1}(\Omega)).
\end{equation}
Indeed, using definition of distributional derivative together with \eqref{1.des}, we have
\begin{multline}
\label{1.uttdis}
<\Dt^2 u, \phi>=-\int_0^T(\nabla u,\nabla \phi)dt-\\
\int_0^T(\Dt u,\alpha \phi+\gamma(-\Dx)^{1/2}\phi)dt-\int_0^T(f(u),\phi)dt+
\int_0^T(g,\phi)dt
\end{multline}
for any $\phi\in C^\infty_0((0,T)\times\Omega)$. Thus, we see that the distribution $\Dt^2 u$ can be extended by continuity to the linear functional on $L^1(0,T;H^1_0(\Omega))$ and \eqref{1.uttinf} now follows from the evident fact that $[L^1(0,T;H^1_0(\Omega))]^*=L^\infty(0,T;H^{-1}(\Omega))$.
Therefore,
$$
\Dt u\in C([0,T],H^{-1}(\Omega))
$$
and the initial data for $\Dt u$ is also well-defined.

\begin{remark}\label{Rem2.EnEq} Since
 $$
 \xi_u\in L^\infty(0,T;\Cal E)\cap C([0,T],\Cal E_{-1}),
 $$
 where $\Cal E_{-1}:=L^2(\Omega)\times H^{-1}(\Omega)$, we conclude that the value $\xi_u(t)$ is well defined for any $t\in[0,T]$ and the function $t\to \xi_u(t)$ is weakly continuous as a function with values in $\Cal E$:
\begin{equation}\label{2.weak}
\xi_u\in C_w([0,T],\Cal E),
\end{equation}
see e.g., \cite{Lions69}. Therefore, all terms in the energy equality \eqref{2.enint} indeed have sense for any $t\ge0$ and any weak energy solution $u$. Nevertheless, the regularity \eqref{2.space} is a priori not enough for establishing the validity of the energy equality since we do not have enough regularity to take $\phi=\Dt u$ in \eqref{1.des} (the most difficult is the term $(f(u),\Dt u)$, see below), so to the best of our knowledge neither the validity of energy equality nor the strong continuity of the function $t\to\xi_u(t)$ in $\Cal E$ was known before for the energy solutions of \eqref{OriginalSys} with quintic nonlinearity. Below we will establish these facts based on \eqref{1.ffrac} and the extra energy type functional.
\end{remark}
The next theorem gives the existence of energy solutions and their dissipativity.

\begin{theorem}\label{Th2.existence} Let the nonlinearity $f$ satisfies \eqref{1.f}, $\alpha,\gamma>0$, $g\in L^2(\Omega)$ and $\xi_u(0)=(u_0,u_1)\in\Cal E$. Then problem \eqref{OriginalSys} (endowed by Dirichlet or periodic boundary conditions) possesses at least one weak energy solution $\xi_u(t)$ in the sense of Definition \ref{Def2.ensol} which satisfies the following estimate:
\begin{equation}\label{2.dis}
\|\xi_u(t)\|_{\Cal E}^2+\int_t^{t+1}\|\Dt u(s)\|^2_{H^{1/2}_\Delta}\,ds\le Q(\|\xi_u(0)\|)e^{-\beta t}+Q(\|g\|_{L^2(\Omega)}),
\end{equation}
where $\beta>0$ and $Q$ is a monotone increasing function which is independent of $t$ and $u$.
\end{theorem}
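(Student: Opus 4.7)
The strategy is the classical Galerkin plus Lyapunov-functional scheme, with the quintic nonlinearity handled via a.e.\ convergence obtained from Aubin--Lions compactness driven by the fractional damping.

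First, I would construct finite-dimensional approximations $u_N(t)=\sum_{i=1}^N u_N^i(t)e_i$ projected onto the span of the first $N$ eigenvectors of $-\Dx$ (with Dirichlet or periodic boundary conditions accordingly). Standard ODE theory on bounded intervals furnishes local solutions; I then obtain global solutions and uniform bounds by deriving the energy identity \eqref{2.eneq} at the approximate level (multiplying the Galerkin system by $\Dt u_N$), which is rigorously justified since $u_N$ is a smooth function of time with values in a finite-dimensional subspace. Using assumption 1 of \eqref{1.f} to get the lower bound $F(u)\ge -C+\frac{\kappa}{2}|u|^2$ and the growth assumption 3 to get the upper bound $F(u)\le C(1+|u|^6)$, one checks that the energy $E(u_N,\Dt u_N)$ is equivalent to $\|\xi_{u_N}\|_{\Cal E}^2$ up to additive and multiplicative constants depending on $\|g\|_{L^2}$.

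For the dissipative estimate \eqref{2.dis}, I would work with the modified Lyapunov functional
\begin{equation*}
\mathcal{L}(u,\Dt u):=E(u,\Dt u)+\eta(\Dt u,u)+\frac{\eta\alpha}{2}\|u\|_{L^2}^2
\end{equation*}
for sufficiently small $\eta>0$ to be fixed. Using \eqref{OriginalSys} and differentiating, the extra term produces $-\eta\|\nabla u\|^2-\eta(f(u),u)+\eta\|\Dt u\|^2-\eta\gamma((-\Dx)^{1/4}u,(-\Dx)^{1/4}\Dt u)+\eta(g,u)$. The dissipativity assumption 1 of \eqref{1.f} gives $-\eta(f(u),u)\le \eta C-\eta\kappa\|u\|_{L^2}^2$; the cross term with the fractional damping is absorbed by Cauchy--Schwarz and Young into a small fraction of $\gamma\|(-\Dx)^{1/4}\Dt u\|^2$ plus a controlled multiple of $\|\nabla u\|^2$. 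After choosing $\eta$ small, one obtains $\frac{d}{dt}\mathcal{L}+\beta\mathcal{L}\le Q(\|g\|_{L^2})$, and Gronwall yields the pointwise-in-time bound in \eqref{2.dis}; integrating the energy identity on $[t,t+1]$ then provides the $L^2(H^{1/2}_\Delta)$ bound on $\Dt u$. All of this is carried out for $u_N$ with constants independent of $N$.

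The main obstacle is passing to the limit $N\to\infty$ in the quintic term $(f(u_N),\phi)$ for the weak formulation \eqref{1.des}. From the uniform bounds, $\xi_{u_N}$ is bounded in $L^\infty(0,T;\Cal E)$ and $\Dt u_N$ is bounded in $L^2(0,T;H^{1/2}_\Delta)$. Using \eqref{1.uttinf} (which follows from equation \eqref{OriginalSys} and the bound $f(u_N)\in L^\infty(0,T;L^{6/5})\hookrightarrow L^\infty(0,T;H^{-1})$ coming from growth assumption 3 combined with $H^1\hookrightarrow L^6$), I have $\Dt^2 u_N$ bounded in $L^\infty(0,T;H^{-1})$. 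Aubin--Lions applied to $u_N\in L^\infty(0,T;H^1_0)$ with $\Dt u_N\in L^2(0,T;L^2)$ gives strong convergence of a subsequence in $L^2(0,T;L^p)$ for any $p<6$, hence along a further subsequence $u_N\to u$ a.e.\ in $(0,T)\times\Omega$. Continuity of $f$ yields $f(u_N)\to f(u)$ a.e., and the uniform $L^\infty(0,T;L^{6/5})$ bound upgrades this a.e.\ limit to weak-$\ast$ convergence in $L^\infty(0,T;L^{6/5})$, which suffices to pass to the limit against smooth test functions. All remaining terms in \eqref{1.des} are linear, so weak convergence suffices. The uniform estimates survive the limit by lower semicontinuity, giving \eqref{2.dis} for $\xi_u$; initial data are recovered from the weak continuity \eqref{2.weak} provided by \eqref{1.uttinf}, completing the proof.
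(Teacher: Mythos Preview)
Your proposal is correct and follows essentially the same route as the paper: the paper merely sketches the argument by saying one multiplies \eqref{OriginalSys} by $\Dt u+\eb u$, applies Gronwall, and justifies via Galerkin approximations, which is precisely what your modified Lyapunov functional $\mathcal L$ and compactness passage implement in detail. Your treatment of the limit in the quintic term via Aubin--Lions and a.e.\ convergence is the standard filling-in of what the paper leaves to the references \cite{BV,Lions69}.
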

The proof of this theorem is completely standard for the theory of damped wave equations. Indeed, to derive \eqref{2.dis} formally it is sufficient to multiply equation \eqref{OriginalSys} by $\Dt u+\eb u$ for some positive $\eb$, integrate by parts and apply the Gronwall inequality. The existence of a solution as well as the justification of estimate \eqref{2.dis} can be done using, say, the Galerkin approximations, see \cite{BV,Lions69} for the details.

We are now ready to state the main result of this section on the extra regularity of energy solutions. For simplicity, we restrict ourselves to consider first only the case of periodic boundary conditions (the analogous result for Dirichlet boundary conditions will be obtained in Section \ref{s6}).

\begin{theorem}\label{Th2.reg} Let $\Omega:=\Bbb T^3$ (with periodic boundary conditions). Then, any weak energy solution $u$ of \eqref{OriginalSys} in the sense of Definition \ref{Def2.ensol} belongs to the space $L^2(0,T;H^{3/2}(\Omega))$ and the following estimate holds:
\begin{multline}\label{2.stest}
\|u\|_{L^2([\tau,\tau+1],H^{3/2}(\Omega))}\le\\\le Q(\|\xi_u\|_{L^\infty([\tau,\tau+1],\Cal E)}+\|\Dt u\|_{L^2([\tau,\tau+1],H^{1/2}_\Delta)}+\|g\|_{L^2(\Omega)}),
\end{multline}
where the monotone function $Q$ is independent of $\tau\in\R_+$ and $u$.
\end{theorem}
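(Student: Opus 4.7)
The plan is to use $(-\Dx)^{1/2}u$ as a multiplier in \eqref{OriginalSys} and thereby extract an integrated $H^{3/2}$-bound. Formally, pairing the equation with $(-\Dx)^{1/2}u$, integrating over $\Bbb T^3\times[\tau,\tau+1]$, and performing one integration by parts in $t$ on the $\Dt^2u$ term, one derives the identity
\begin{multline*}
\int_\tau^{\tau+1}\|(-\Dx)^{3/4}u\|_{L^2}^2\,dt=\int_\tau^{\tau+1}\|(-\Dx)^{1/4}\Dt u\|_{L^2}^2\,dt\\
-\Big[(\Dt u,(-\Dx)^{1/2}u)+\tfrac{\gamma}{2}\|\Nx u\|_{L^2}^2+\tfrac{\alpha}{2}\|(-\Dx)^{1/4}u\|_{L^2}^2\Big]_\tau^{\tau+1}\\
-\int_\tau^{\tau+1}(f(u),(-\Dx)^{1/2}u)\,dt+\int_\tau^{\tau+1}(g,(-\Dx)^{1/2}u)\,dt.
\end{multline*}
Each term on the right is controlled by the quantities appearing in \eqref{2.stest}: the $\Dt u$-integral by \eqref{2.space}; the boundary bracket (evaluated at $\tau$ and $\tau+1$) by $\|\xi_u\|_{L^\infty([\tau,\tau+1],\Cal E)}$; the source term by Cauchy--Schwarz and Young's inequality, absorbing a small fraction of $\|(-\Dx)^{3/4}u\|^2_{L^2}$ into the left-hand side if necessary; and, crucially, the nonlinear term by the estimate $(f(u),(-\Dx)^{1/2}u)\ge -C\|u\|_{H^{1/2}(\Bbb T^3)}^2$ supplied by \eqref{1.ffrac}. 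Combined with the equivalence $\|u\|_{H^{3/2}}^2\sim\|u\|_{L^2}^2+\|(-\Dx)^{3/4}u\|_{L^2}^2$ from \eqref{eq norm |u|+|Ds/2 u|}, this would deliver \eqref{2.stest}.

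The principal obstacle is that, at the energy regularity \eqref{2.space}, this multiplication is not directly justified: $f(u)$ lies only in $L^\infty([\tau,\tau+1],L^{6/5}(\Bbb T^3))$ while $(-\Dx)^{1/2}u\in L^\infty(L^2)$, so the pairing $(f(u),(-\Dx)^{1/2}u)$ is not a priori defined (one would need $u\in W^{1,6}$, strictly stronger than $H^1$). The remedy, anticipated by the machinery of Section \ref{s1}, is to replace $(-\Dx)^{1/2}$ by the regularized Fourier multiplier $A_\eb$ on $\Bbb T^3$ with symbol $m_\eb(k):=4\int_{\R^3}\sin^2(k\cdot h/2)/\theta_\eb(h)^4\,dh$, characterized intrinsically by $(A_\eb u,v)=[u,v]_{1/2,\eb}$. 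For each fixed $\eb>0$ the operator $A_\eb$ is a bounded Fourier multiplier on every $H^s(\Bbb T^3)$, so $A_\eb u$ is a legitimate test function in \eqref{1.des} and all terms in the analogous multiplied identity are unambiguously defined.

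I would then bound each term uniformly in $\eb>0$ using the tools of Section \ref{s1}: the nonlinear term via $(A_\eb u,f(u))=[u,f(u)]_{1/2,\eb}\ge -K[u,u]_{1/2,\eb}\ge -C\|u\|_{H^{1/2}}^2$ from \eqref{1.ffrace} and \eqref{1.ez}; the $\Dt u$-contribution via $[\Dt u,\Dt u]_{1/2,\eb}\le c^{-1}\|(-\Dx)^{1/4}\Dt u\|_{L^2}^2$; the source and boundary brackets via the $\eb$-uniform pointwise bound $m_\eb(k)\le c^{-1}|k|$. Together these produce a bound on
$$
\int_\tau^{\tau+1}(A_\eb u,-\Dx u)\,dt=\int_\tau^{\tau+1}\sum_{k\in\Bbb Z^3}m_\eb(k)|k|^2|u_k(t)|^2\,dt
$$
that is uniform in $\eb>0$. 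Since $h\mapsto\theta_\eb(h)^{-4}$ is monotone nondecreasing as $\eb\downarrow 0$, so is $m_\eb(k)$, and $m_\eb(k)\nearrow c^{-1}|k|$ pointwise; hence monotone convergence (in the spirit of Lemma \ref{Lem1.Fatou}) lets us pass to the limit $\eb\downarrow 0$, giving an $L^2$-in-time bound on $\|(-\Dx)^{3/4}u\|_{L^2}$ and thereby \eqref{2.stest}.
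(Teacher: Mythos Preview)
Your argument is correct and follows essentially the same route as the paper. The paper also first writes the formal multiplication by $(-\Dx)^{1/2}u$, then justifies it by replacing the pairing with the regularized bracket $[\cdot,\cdot]_{1/2,\eb}$ of \eqref{1.es} (equivalently, testing \eqref{1.uttdis} against $\phi_{h,\eb}(x)=-\theta_\eb(h)^{-4}(u(x+h)-2u(x)+u(x-h))$ and integrating in $h$), bounds the nonlinear term via \eqref{1.ffrace}, the remaining terms via \eqref{1.ez}, and passes to the limit $\eb\to0$ by Lemma~\ref{Lem1.Fatou}. Your Fourier-multiplier operator $A_\eb$ with symbol $m_\eb$ is exactly this regularized bracket written on the spectral side, so the two presentations coincide; the only cosmetic difference is that the paper estimates the boundary term $[\Dt u,u]_{1/2,\eb}$ by rewriting it as $[(-\Dx+1)^{-1/4}\Dt u,(-\Dx+1)^{1/4}u]_{1/2,\eb}$ and using Cauchy--Schwarz in the bracket, whereas you achieve the same bound directly from $m_\eb(k)\le c^{-1}|k|$.
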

\begin{proof} We give first the formal derivation of estimate \eqref{2.stest}. To this end, we multiply equation \eqref{OriginalSys} by $(-\Dx)^{1/2}u$ and integrate over $x\in\Omega$. Then, integrating by parts and using \eqref{1.ffrac}, we get
\begin{multline}
\frac d{dt}\((\Dt u,(-\Dx)^{1/2}u)+\frac\gamma2\|(-\Dx)^{1/2}u\|^2_{L^2(\Omega)}+\frac\alpha2\|(-\Dx)^{1/4}u\|^2_{L^2(\Omega)}\)+\\+\|(-\Dx)^{3/4}u\|^2_{L^2(\Omega)}\le C(\|g\|^2_{L^2(\Omega)}+\|u\|^2_{H^1(\Omega)})+\|(-\Dx)^{1/4}\Dt u\|^2_{L^2(\Omega)}).
\end{multline}
Integrating this inequality in time $t\in[\tau,\tau+1]$ and using the Cauchy-Schwartz inequality again, we end up with
\begin{multline}\label{2.long}
\int_\tau^{\tau+1}\|(-\Dx)^{3/4}u(s)\|^2_{L^2(\Omega)}\,ds\le \int_\tau^{\tau+1}\|(-\Dx)^{1/4}\Dt u(s)\|^2_{L^2(\Omega)}\,ds+\\+ C\(\|g\|^2_{L^2(\Omega)}+\|u(\tau)\|^2_{H^1(\Omega)}+\|u(\tau+1)\|^2_{H^1(\Omega)}+\|\Dt u(\tau)\|^2_{L^2(\Omega)}+\right.\\\left.+\|\Dt u(\tau+1)\|^2_{L^2(\Omega)}+\int_{\tau}^{\tau+1}\|u(s)\|^2_{H^1(\Omega)}\,ds\)
\end{multline}
which gives the desired estimate \eqref{2.stest}. Thus, it only remains to justify the above estimates. To this end, we will take the inner product \eqref{1.es} with $s=1/2$ and $\eb>0$ of equation \eqref{OriginalSys} with $u$. This, is equivalent to taking the test function
$$
\phi_{h,\eb}(x):=-\frac{u(x+h)-2u(x)+u(x-h)}{\theta_{\eb}(h)^4}
$$
in \eqref{1.uttdis} with further integration with respect to $h\in\R^3$. Since the function $\phi_{h,\eb}\in L^1(0,T;H^1_0(\Omega))$, then this integration is justified. Using now \eqref{1.ez} and \eqref{1.ffrace} together with the standard formulas
$$
[\Dx u,u]_{1/2,\eb}=-[\Nx u,\Nx u]_{1/2,\eb},\ \ [\Dt^2 u,u]_{1/2,\eb}=\Dt[\Dt u,u]_{1/2,\eb}-[\Dt u,\Dt u]_{1/2,\eb},
$$
we end up with the following analogue of \eqref{2.long}:
\begin{multline}\label{2.llong}
\int_\tau^{\tau+1}[\Nx u(s),\Nx u(s)]_{1/2,\eb}\,ds\le \\\le[\Dt u(s),u(s)]_{1/2,\eb}\big|_{s=\tau}^{s=\tau+1}+\\+C(\|g\|^2_{L^2}+\|u(\tau)\|^2_{H^1(\Omega)})+C\int_\tau^{\tau+1}\|\Dt u(s)\|^2_{H^{1/2}(\Omega)}+\|u(s)\|^2_{H^1(\Omega)}\,ds.
\end{multline}
We estimate the middle term in \eqref{2.llong} as follows:
\begin{multline}
[\Dt u,u]_{1/2,\eb}=[(-\Dx+1)^{-1/4}\Dt u,(-\Dx+1)^{1/4}u]_{1/2,\eb}\le\\\le [(-\Dx+1)^{-1/4}\Dt u,(-\Dx+1)^{-1/4}\Dt u]_{1/2,\eb}+
[(-\Dx+1)^{1/4}u,(-\Dx+1)^{1/4}u]_{1/2,\eb}\le\\\le C(\|(-\Dx)^{1/4}(-\Dx+1)^{-1/4}\Dt u\|^2_{L^2}+\|(-\Dx)^{1/4}(-\Dx+1)^{1/4}u\|^2_{L^2})\le C\|\xi_u\|^2_{\Cal E}.
\end{multline}
Inserting this estimate in \eqref{2.llong}, passing to the limit $\eb\to0$ and using Lemma \ref{Lem1.Fatou}, we justify estimate \eqref{2.long} and finish the proof of the theorem
\end{proof}
\begin{remark}\label{rem2.supercrit}
We emphasize that the above proof works only in the case of periodic boundary conditions although, as we will see later, estimate \eqref{2.stest} remains true also for the case of Dirichlet boundary conditions. On the other hand, the above given proof essentially uses only the fact that $f'(u)\ge-K$ and the growth restrictions on $f$ is nowhere essentially used, so the above result remains true in the supercritical case of faster than quintic growth rate as well.
\end{remark}
\begin{cor} Let the energy solution of problem \eqref{OriginalSys} satisfy estimate \eqref{2.stest}. Then, for every $s\in[0,1)$, $u\in L^{2/s}(0,T;L^{6/(1-s)}(\Omega))$ and the following estimate holds:
\begin{equation}\label{2.s-t-dis}
\|u\|_{L^{2/s}([t,t+1],L^{6/(1-s)}(\Omega))}\le Q_s(\|\xi_u(0)\|_{\Cal E})e^{-\beta t}+Q(\|g\|_{L^2(\Omega)}),
\end{equation}
where the monotone function $Q_s$ depends on $s$, but is independent of $t$ and $u$.
\end{cor}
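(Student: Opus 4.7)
The statement asserts a mixed space-time integrability that sits strictly between two extremes we already control: the a priori bound $\xi_u\in L^\infty(\R_+;\Cal E)$ (so that $u\in L^\infty(\R_+;H^1)$) coming from Theorem~\ref{Th2.existence}, and the extra regularity $u\in L^2_{\rm loc}(\R_+;H^{3/2})$ from Theorem~\ref{Th2.reg}. My plan is therefore to prove the corollary by interpolating in time between these two endpoints and then invoking the Sobolev embedding in $\R^3$.

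\medskip

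First, I would fix $t\ge0$ and work on the unit interval $I_t:=[t,t+1]$. Combining \eqref{2.dis} and \eqref{2.stest} gives
\[
\|u\|_{L^\infty(I_t;H^1)}^2+\|u\|_{L^2(I_t;H^{3/2})}^2\le Q(\|\xi_u(0)\|_{\Cal E})e^{-\beta t}+Q(\|g\|_{L^2}),
\]
since \eqref{2.dis} controls both $\|\xi_u(t)\|_{\Cal E}$ and $\int_t^{t+1}\|\Dt u\|_{H^{1/2}_\Delta}^2\,d\tau$ by the right-hand side of the desired form, which then feeds into \eqref{2.stest}. (Strictly speaking, one may need to increase the monotone function $Q$ and adjust constants, but the exponential rate $\beta$ is preserved.)

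\medskip

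Next, for any $s\in[0,1)$, I would apply the elementary Hölder-type interpolation in time: writing
\[
\|u(\tau)\|_{H^{1+s/2}}\le C\|u(\tau)\|_{H^1}^{1-s}\|u(\tau)\|_{H^{3/2}}^{s},
\]
which follows from $H^{1+s/2}=[H^1,H^{3/2}]_s$, and then integrating the $(2/s)$-th power in $\tau\in I_t$,
\[
\int_{I_t}\|u(\tau)\|_{H^{1+s/2}}^{2/s}\,d\tau\le C\,\|u\|_{L^\infty(I_t;H^1)}^{2(1-s)/s}\int_{I_t}\|u(\tau)\|_{H^{3/2}}^{2}\,d\tau.
\]
Since $\Omega\subset\R^3$ and $1+s/2<3/2$, the standard Sobolev embedding yields $H^{1+s/2}(\Omega)\hookrightarrow L^{6/(1-s)}(\Omega)$ because $\frac12-\frac{1+s/2}{3}=\frac{1-s}{6}$. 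Therefore
\[
\|u\|_{L^{2/s}(I_t;L^{6/(1-s)})}\le C\,\|u\|_{L^\infty(I_t;H^1)}^{1-s}\,\|u\|_{L^2(I_t;H^{3/2})}^{s},
\]
and substituting the dissipative bound from the previous step gives the desired estimate, with the exponential factor $e^{-\beta t}$ coming out unchanged (up to absorbing the exponents $1-s$ and $s$ into the monotone functions $Q_s$, $Q$ and modifying $\beta$ by a factor if needed to match the stated form).

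\medskip

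I do not expect genuine difficulties here: the two endpoint bounds are supplied by the preceding theorems, the interpolation inequality is classical, and the Sobolev embedding is a routine computation. The only subtlety is the restriction $s<1$, which is essential because at $s=1$ one would need the embedding $H^{3/2}\hookrightarrow L^\infty$ in $\R^3$, which fails; this is exactly why $6/(1-s)$ and $2/s$ blow up as $s\to1^-$. A minor technical point is to confirm that the constant in the interpolation $H^{1+s/2}=[H^1,H^{3/2}]_s$ is uniform in $s$ on compact sub-intervals of $[0,1)$, which can be absorbed into the $s$-dependent function $Q_s$.
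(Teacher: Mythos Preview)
Your proof is correct and follows essentially the same route as the paper: the paper simply records the interpolation inequality
\[
\|u\|_{L^{2/s}([t,t+1],L^{6/(1-s)})}\le C_s\|u\|^{1-s}_{L^\infty([t,t+1],H^1)}\|u\|^s_{L^2([t,t+1],H^{3/2})}
\]
and combines it with \eqref{2.dis} and \eqref{2.stest}, while you have unpacked that inequality into its constituent steps (spatial complex interpolation $H^{1+s/2}=[H^1,H^{3/2}]_s$, H\"older in time, and the Sobolev embedding $H^{1+s/2}\hookrightarrow L^{6/(1-s)}$). Nothing more is needed.
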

Indeed, \eqref{2.s-t-dis} follows from \eqref{2.stest}, \eqref{2.dis} and the interpolation inequality
$$
\|u\|_{L^{2/s}([t,t+1],L^{6/(1-s)}(\Omega))}\le C_s\|u\|^{1-s}_{L^\infty([t,t+1],H^1(\Omega))}\|u\|^s_{L^2([t,t+1],H^{3/2}(\Omega))}.
$$
The next result shows that the extra regularity \eqref{2.stest} is enough to verify the energy equality.

\begin{cor}\label{Cor2.eneq} Let the energy solution $u(t)$ of problem \eqref{OriginalSys} (with periodic or Dirichlet boundary conditions) satisfy estimate \eqref{2.stest}. Then the function $t\to E(u(t),\Dt u(t))$ is absolutely continuous and the energy identity \eqref{2.eneq} holds for almost all $t$. In particular, $\xi_u\in C([0,T],\Cal E)$.
\end{cor}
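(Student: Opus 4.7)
The plan is to multiply the equation by $\Dt u$, which is not an admissible test function at the energy level, so we first regularize in time. As a preliminary step, I would apply \eqref{2.s-t-dis} with $s=1/5$ (granted by \eqref{2.stest}) to obtain $u\in L^{10}(0,T;L^{15/2}(\Omega))$; the quintic growth in \eqref{1.f} then forces $f(u)\in L^2(0,T;L^{3/2}(\Omega))$, and the embedding $H^{1/2}_\Delta\hookrightarrow L^3(\Omega)$ together with H\"older places $(f(u),\Dt u)\in L^1(0,T)$. Let $\rho_\eb$ be a symmetric smooth mollifier supported in $(-\eb,\eb)$; after any convenient extension of $u$ past the endpoints $0,T$, set $u^\eb:=\rho_\eb\ast_t u$. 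Convolving the distributional identity \eqref{1.des} with $\rho_\eb$ gives the mollified equation
$$
\Dt^2 u^\eb+\gamma(-\Dx)^{1/2}\Dt u^\eb+\alpha\Dt u^\eb-\Dx u^\eb+(f(u))^\eb=g
$$
pointwise for $t\in(\eb,T-\eb)$ as an identity in $H^{-1}(\Omega)$, and $\Dt u^\eb$ is now admissible as a test function. Pairing and integrating on $[s,t]\subset(\eb,T-\eb)$ produces a mollified energy identity; as $\eb\to 0$ every linear quadratic term converges at a.e.\ $s,t$ by Lebesgue differentiation, while the nonlinear term converges in $L^1_{loc}$ via $(f(u))^\eb\to f(u)$ in $L^2(0,T;L^{3/2})$ and $\Dt u^\eb\to\Dt u$ in $L^2(0,T;L^3)$.

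The remaining task is to identify the limit $\int_s^t(f(u),\Dt u)\,d\tau$ with $(F(u),1)\big|_s^t$. Set $G(\tau):=(F(u(\tau)),1)$ and $G^\eb(\tau):=(F(u^\eb(\tau)),1)$. Fubini combined with the pointwise chain rule applied to the time-smooth function $u^\eb$ gives $G^\eb(\tau_2)-G^\eb(\tau_1)=\int_{\tau_1}^{\tau_2}(f(u^\eb),\Dt u^\eb)\,d\tau$ for $\eb\le\tau_1<\tau_2\le T-\eb$. Using $u^\eb\to u$ in $L^6((0,T)\times\Omega)$ together with the majorant $|F(u)|\le C(1+|u|^6)$, Vitali's theorem yields $G^\eb\to G$ in $L^1(0,T)$, while the previous nonlinear convergence yields $(f(u^\eb),\Dt u^\eb)\to(f(u),\Dt u)$ in $L^1(0,T)$. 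Consequently $G$ is absolutely continuous with $G'=(f(u),\Dt u)$; inserting this into the limit of the mollified identity produces \eqref{2.eneq} in integrated form, whence $t\mapsto E(u(t),\Dt u(t))$ is absolutely continuous.

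Finally, for the strong continuity $\xi_u\in C([0,T],\Cal E)$, I would invoke a Ball-type argument. If $t_n\to t_0$ then $\xi_u(t_n)\rightharpoonup\xi_u(t_0)$ in $\Cal E$ by \eqref{2.weak}, and $E(\xi_u(t_n))\to E(\xi_u(t_0))$ by the already-established continuity of the energy. Write $E$ as the sum of $\tfrac12\|\Dt u\|^2_{L^2}$, $\tfrac12\|\nabla u\|^2_{L^2}$, the functional $u\mapsto(F(u)+\tfrac{K}{2}u^2,1)$, and the remainder terms $-\tfrac{K}{2}\|u\|^2_{L^2}-(g,u)$. The remainders converge along $t_n\to t_0$ by the compact embedding $H^1\hookrightarrow L^2$, while the first three summands are each weakly lower semicontinuous on $\Cal E$ (the third because $f'\ge-K$ makes its integrand convex and the induced functional is continuous on $L^6(\Omega)$, hence on $H^1(\Omega)$). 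Since the sum of the three lsc summands converges, each must converge individually, which combined with weak convergence upgrades to strong convergence in $\Cal E$. The main obstacle throughout is the critical quintic growth of $f$: both integrability of $(f(u),\Dt u)$ and the chain rule for $G$ rely crucially on the extra regularity $u\in L^{10}(0,T;L^{15/2})$ supplied by Theorem~\ref{Th2.reg} and its corollary; in any subcritical regime these steps would be routine.
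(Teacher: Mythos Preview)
Your proof is correct but follows a genuinely different route from the paper's. The paper observes that the extra regularity \eqref{2.stest} places not only $f(u)$ but \emph{every} term of the equation in $L^2(0,T;H^{-1/2}_\Delta)$: indeed $u\in L^2(0,T;H^{3/2})$ gives $\Delta u\in L^2(0,T;H^{-1/2}_\Delta)$ directly, and then the equation itself forces $\partial_t^2 u\in L^2(0,T;H^{-1/2}_\Delta)$ as well. Since $\partial_t u\in L^2(0,T;H^{1/2}_\Delta)=[L^2(0,T;H^{-1/2}_\Delta)]^*$, the pairing of the equation with $\partial_t u$ is justified \emph{without any mollification} via the standard Lions--Magenes/Temam duality framework; absolute continuity of the quadratic terms and the chain rule for $(F(u),1)$ are then quoted from that framework. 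Your time-mollification argument is equally valid but more laborious: you exploit the extra regularity only through the integrability of $(f(u),\partial_t u)$ and compensate by smoothing in $t$. The paper's route is shorter because it uses the full strength of $u\in L^2(0,T;H^{3/2})$ to locate a single duality pair in which the whole equation lives; your route has the merit of being self-contained and not depending on abstract results about absolutely continuous Banach-space-valued functions. Your Ball-type argument for $\xi_u\in C([0,T],\Cal E)$ is exactly what the paper means by ``follows in a standard way from the energy equality''.
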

\begin{proof} Indeed, using the \eqref{2.s-t-dis} with $s=\frac15$, the embedding $H^{1/2}_\Delta\subset L^3(\Omega)$ and growth restriction \eqref{1.f} on the nonlinearity $f$, we see that
$$
\|f(u)\|_{L^2(t,t+1;H^{-1/2}_\Delta)}\le\! C\|f(u)\|_{L^2(t,t+1;L^{3/2}(\Omega))}\le\! C(1+\|u\|_{L^{10}(t,t+1;L^{15/2}(\Omega))}^5)\le C_u
$$
and, therefore, $f(u)\in L^2(0,T;H^{-1/2}_\Delta)$. Moreover, due to estimate \eqref{2.stest}, $\Dx u\in L^2(0,T;H^{-1/2}_\Delta)$ and, due to Definition \ref{Def2.ensol}, $(-\Dx)^{1/2}\Dt u\in L^2(0,T;H^{-1/2}_\Delta)$. Thus, from equation \eqref{OriginalSys}, we derive that $\Dt^2 u\in L^2(0,T; H^{-1/2}_\Delta)$ as well.
\par
Since all terms of equation \eqref{OriginalSys} belong to the space $L^2(0,T;H^{-1/2}_\Delta)$ and
$$
\Dt u\in L^2(0,T;H^{1/2}_\Delta)=[L^2(0,T;H^{-1/2}_\Delta)]^*,
$$
 the multiplication of the equation on $\Dt u$ (= taking the test function $\phi=\Dt u$ in \eqref{1.des}) is allowed. In addition, this regularity implies that the functions $t\to \|\Dt u(t)\|_{L^2}$ and $t\to\|\Nx u(t)\|^2_{L^2}$ are absolutely continuous and
$$
(\Dt^2 u(t),\Dt u(t))=\frac12\frac d{dt}\|\Dt u(t)\|^2_{L^2},\ \ (\Nx u(t),\Nx\Dt u(t)) =\frac12\frac d{dt}\|\Nx u(t)\|^2_{L^2},
$$
for almost all $t$, see e.g., \cite{TemamDS}. Finally, approximating the function $u$ by smooth functions and passing to the limit, one verifies that the function $t\to (F(u(t)),1)$ is absolutely continuous and
$$
\frac d{dt}(F(u(t)),1)=(f(u),\Dt u)
$$
for almost all $t$. Thus, the energy equality is proved.
 The continuity of $\xi_u(t)$ as a $\Cal E$-valued function follows in a standard way from the energy equality and the corollary is proved.

\end{proof}

\section{Uniqueness and smoothing property}\label{s4}
In that section we show that the extra regularity \eqref{2.stest} is sufficient to verify the well-posedness and smoothness of energy solutions. Everywhere in that and next sections we assume that any energy solution satisfies \eqref{2.stest}. As we have already seen that is the case when the periodic boundary conditions are posed (as will be shown below, that is also true in the case of Dirichlet boundary conditions under some additional technical assumptions). We start with the uniqueness result.

\begin{theorem}
\label{Th4.uni}
Let the assumptions of Theorem \ref{Th2.existence} hold and let, in addition, all energy solutions satisfy \eqref{2.stest}. Then the energy solution of problem \eqref{OriginalSys} is unique and for the difference $v(t)$ of two energy solutions
$u_1(t)$ and $u_2(t)$ the following estimate is valid:
\begin{equation}
\label{4.lip}
\|v(t)\|^2_{H^1(\Omega)}+\|\partial_tv(t)\|^2_{L^2(\Omega)}\leq e^{\hat{K}t}(\|v(0)\|^2_{H^1(\Omega)}+\|\partial_tv(0)\|^2_{L^2(\Omega)}),\ \forall t\in[0,T],
\end{equation}
where $\hat{K}$ is a positive constant depending on $\|\xi_{u_1}(0)\|_\Cal E$, $\|\xi_{u_2}(0)\|_\Cal E$ and $\|g\|_{L^2(\Omega)}$.
\end{theorem}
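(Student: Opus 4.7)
The plan is to look at the difference $v := u_1 - u_2$, derive an energy identity for it, and close the resulting inequality by Gronwall; the only nontrivial issue is to control the critical quintic nonlinearity.

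Subtracting the equations satisfied by $u_1$ and $u_2$, the difference $v$ solves
$$
\Dt^2 v + \gamma(-\Dx)^{1/2}\Dt v + \alpha\Dt v - \Dx v = -(f(u_1) - f(u_2)).
$$
Since by assumption each $u_i$ enjoys the extra regularity \eqref{2.stest}, so does $v$. Repeating verbatim the argument of Corollary~\ref{Cor2.eneq} for this difference equation shows that every term above lies in $L^2(0,T;H^{-1/2}_\Delta)$, while $\Dt v \in L^2(0,T;H^{1/2}_\Delta)$, so $\phi=\Dt v$ is a legitimate test function. This yields
$$
\tfrac{d}{dt}\mathcal F(t) + \alpha\|\Dt v(t)\|^2_{L^2} + \gamma\|(-\Dx)^{1/4}\Dt v(t)\|^2_{L^2} = -(f(u_1)-f(u_2),\Dt v),
$$
with $\mathcal F(t) := \tfrac12\|\Dt v(t)\|^2_{L^2} + \tfrac12\|\Nx v(t)\|^2_{L^2}$.

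The main step is to dominate the right-hand side. Starting from the pointwise bound $|f(u_1)-f(u_2)|\le C(1+|u_1|^4+|u_2|^4)|v|$ coming from \eqref{1.f}, a H\"older split with exponents $(2,6,3)$ together with the Sobolev embeddings $H^1(\Omega)\hookrightarrow L^6(\Omega)$ and $H^{1/2}_\Delta \hookrightarrow L^3(\Omega)$ gives
$$
|(f(u_1)-f(u_2),\Dt v)| \le C(1+\|u_1\|^4_{L^8}+\|u_2\|^4_{L^8})\,\|v\|_{H^1}\,\|\Dt v\|_{H^{1/2}_\Delta}.
$$
A Young inequality then absorbs a small fraction of $\alpha\|\Dt v\|^2_{L^2}+\gamma\|(-\Dx)^{1/4}\Dt v\|^2_{L^2}$ on the left and leaves the differential inequality $\tfrac{d}{dt}\mathcal F(t) \le C h(t)\mathcal F(t)$, with time-dependent weight $h(t) := 1 + \|u_1(t)\|^8_{L^8} + \|u_2(t)\|^8_{L^8}$.

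The final ingredient is that $h \in L^1_{\mathrm{loc}}(\R_+)$ with a quantitative bound depending only on $\|\xi_{u_i}(0)\|_{\Cal E}$ and $\|g\|_{L^2}$; this is exactly \eqref{2.s-t-dis} applied with $s = 1/4$, which yields $\int_t^{t+1}\|u_i(s)\|^8_{L^8(\Omega)}\,ds \le Q(\|\xi_{u_i}(0)\|_{\Cal E}+\|g\|_{L^2})$. A standard Gronwall argument now delivers the exponential estimate \eqref{4.lip}, and uniqueness is the particular case $v(0)=0$. The main obstacle in this scheme is precisely the criticality of the quintic exponent: a naive energy-class H\"older bound on the nonlinear term does not close, and it is only the new $L^2_t H^{3/2}_x$-regularity established in Theorem~\ref{Th2.reg}---interpolated as in \eqref{2.s-t-dis} to an $L^8_{t,x}$-bound on $u$---that renders the Gronwall weight $h$ integrable and allows the argument to close.
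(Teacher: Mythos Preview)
Your proof is correct and follows essentially the same strategy as the paper: derive an energy identity for the difference $v$, bound the nonlinear term via the growth condition and H\"older, and close by Gronwall using the extra $L^2_tH^{3/2}_x$-regularity interpolated through \eqref{2.s-t-dis}. The only (harmless) difference is the H\"older split: the paper uses exponents $(3,6,2)$, placing $\Dt v$ in $L^2$ and the weight $|u_i|^4$ in $L^3$ (hence $u_i\in L^4_tL^{12}_x$ via \eqref{2.s-t-dis} with $s=1/2$), whereas you use $(2,6,3)$, placing $\Dt v$ in $L^3\supset H^{1/2}_\Delta$ and absorbing it into the fractional damping, which leads instead to $u_i\in L^8_tL^8_x$ via $s=1/4$.
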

\begin{proof}
The function $v(t)$ as difference of two solutions $u_1(t)$ and $u_2(t)$ solves
\begin{equation}
\label{differEq}
\begin{cases}
\Dt^2v+\alpha\Dt v+\gamma(-\Delta)^{1/2}\Dt v-\Dx v+f(u_1)-f(u_2)=0,\\
\xi_v(0)=\xi_{u_1}(0)-\xi_{u_2}(0).
\end{cases}
\end{equation}
Since both $u_1$ and $u_2$ belong to $L^2(0,T;H^{3/2}(\Omega))$,
arguing as in Corollary \ref{Cor2.eneq}, we can justify the  multiplication of \eqref{differEq} by $\Dt v$ and obtain the following identity
\begin{multline}
\label{DifferEq*vt}
\frac12\frac{d}{dt}\|\xi_v(t)\|^2_{\Cal E}+\gamma\|(-\Dx)^\frac{1}{4}v_t\|^2_{L^2}+\alpha\|\Dt v(t)\|^2_{L^2}= -(f(u_1)-f(u_2),\Dt v).
\end{multline}
We estimate the right-hand side of  \eqref{DifferEq*vt} using the integral mean value theorem, growth restriction \eqref{1.f} on $f'$, and the H\"older inequality with exponents $3$, $6$ and~$2$:
\begin{multline}
\label{nonlinTermEst}
|(f(u_1)-f(u_2),v_t)|\le\left(\int_0^1|f'(\lambda u_2+(1-\lambda)u_1)|d\lambda,|v||v_t|\right)\leq\\
C\left((1+|u_1|^4+|u_2|^4),|v||v_t|\right)\le
C(1+\|u_1\|_{L^{12}(\Omega)}^4+\|u_2\|^4_{L^{12}(\Omega)})\|v\|_{H^1}\|\Dt v\|_{L^2}\le\\\le C(1+\|u_1\|_{L^{12}(\Omega)}^4+\|u_2\|^4_{L^{12}(\Omega)})\|\xi_v\|^2_{\Cal E}.
\end{multline}
Combining \eqref{DifferEq*vt} and \eqref{nonlinTermEst}, we have
\begin{multline}\label{4.main}
\frac{d}{dt}\|\xi_v(t)\|^2_{\Cal E}+2\gamma\|(-\Dx)^{1/4}\Dt v(t)\|^2_{L^2(\Omega)}+2\alpha\|\Dt v(t)\|^2_{L^2(\Omega)}\leq\\
C(1+\|u_1(t)\|_{L^{12}(\Omega)}^4+\|u_2(t)\|_{L^{12}(\Omega)}^4)\|\xi_v(t)\|^2_{\Cal E}.
\end{multline}
Using now \eqref{2.s-t-dis} with $s=1/2$, we get
\begin{equation}\label{4.412}
\|u_i\|_{L^4(0,T;L^{12}(\Omega))}^4\le (Q(\|\xi_{u_i}(0)\|_{\Cal E})+Q(\|g\|_{L^2(\Omega)}))(T+1),\ \ i=1,2,
\end{equation}
and the Gronwall inequality applied to \eqref{4.main} gives the desired estimate \eqref{4.lip} and finishes the proof of the theorem.
\end{proof}

The next proposition gives us additional smoothness of solutions assuming that the initial data is more regular. We start with the estimate which is divergent as time tends to infinity, the analogous dissipative estimate will be obtained later.

\begin{prop}\label{Prop4.e1smooth} Let assumptions of Theorem \ref{Th4.uni} hold and the initial data
\begin{equation}
\label{4.e1ini}
\xi_u(0)\in\Cal E_1:=[H^2(\Omega)\cap H^1_0(\Omega)]\times H^1_0(\Omega).
\end{equation}
Then $\xi_{u}(t)$ belongs to $\Cal E_1$ for any positive $t$ and the following estimate holds:
\begin{equation}\label{4.e1div}
\|\xi_{\Dt u}(t)\|^2_{\Cal E}+\|\xi_u(t)\|^2_{\Cal E^1}\le
Q(\|\xi_u(0)\|_{\Cal E_1}+\|g\|_{L^2(\Omega)})e^{Q(\|\xi_u(0)\|_{\Cal E}+\|g\|_{L^2(\Omega)})(t+1)},
\end{equation}
for some monotone increasing function $Q$ which is independent of $t\ge0$ and $u$.
\end{prop}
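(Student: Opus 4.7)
\textbf{Proof proposal for Proposition \ref{Prop4.e1smooth}.}

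The plan is to upgrade regularity in two stages: first differentiate the equation in time to obtain an $\mathcal{E}$-bound on $\xi_v:=(\partial_t u,\partial_t^2u)$, and then recover the $H^2$-control on $u$ itself by exploiting the equation as an elliptic identity (after a preliminary $H^{3/2}$-bound obtained via the multiplier $-\Delta u$). All formal computations will be performed on Galerkin approximations, where every quantity is smooth in $x$, and the final estimates pass to the weak limit; uniqueness (Theorem~\ref{Th4.uni}) identifies that limit with the given energy solution.

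\emph{Step 1: Energy estimate for $\xi_v$.} Setting $v=\partial_t u$ and differentiating \eqref{OriginalSys} in $t$, one obtains the linearized equation
$$
\partial_t^2 v+\gamma(-\Delta)^{1/2}\partial_t v+\alpha\partial_t v-\Delta v=-f'(u)v.
$$
Multiplying by $\partial_t v$ yields
$$
\tfrac12\tfrac{d}{dt}\|\xi_v\|_\mathcal{E}^2+\alpha\|\partial_t v\|_{L^2}^2+\gamma\|(-\Delta)^{1/4}\partial_t v\|_{L^2}^2=-(f'(u)v,\partial_t v),
$$
and the growth assumption $|f'(u)|\le C(1+|u|^4)$ combined with H\"older in exponents $3,6,2$ gives $|(f'(u)v,\partial_tv)|\le C(1+\|u\|_{L^{12}}^4)\|\xi_v\|_\mathcal{E}^2$. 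Invoking \eqref{2.s-t-dis} with $s=\tfrac12$ to control $\int_0^t\|u(s)\|_{L^{12}}^4\,ds$ and applying Gronwall produces
$$
\|\xi_v(t)\|_\mathcal{E}^2\le \|\xi_v(0)\|_\mathcal{E}^2\exp\bigl(Q(\|\xi_u(0)\|_\mathcal{E}+\|g\|_{L^2})(t+1)\bigr).
$$
The initial quantity $\|\xi_v(0)\|_\mathcal{E}$ is controlled by $\|\xi_u(0)\|_{\mathcal{E}_1}+\|g\|_{L^2}$: $v(0)=u_1\in H^1_0$ by hypothesis, while $\partial_t v(0)=g+\Delta u_0-\gamma(-\Delta)^{1/2}u_1-\alpha u_1-f(u_0)$ is in $L^2$, the only nontrivial term being $f(u_0)$, which is $L^2$ since $u_0\in H^2\hookrightarrow L^\infty$ in dimension $3$.

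\emph{Step 2: Preliminary $H^{3/2}$-bound for $u$ via the multiplier $-\Delta u$.} Taking the $L^2$-inner product of \eqref{OriginalSys} with $-\Delta u$ and using $(f(u),-\Delta u)=(f'(u)\nabla u,\nabla u)\ge -K\|\nabla u\|_{L^2}^2$ (the sign condition $f'\ge -K$ being the crucial input here), one derives
$$
\tfrac{d}{dt}\Phi(t)+\tfrac12\|\Delta u\|_{L^2}^2\le \|\nabla\partial_t u\|_{L^2}^2+K\|\nabla u\|_{L^2}^2+\tfrac12\|g\|_{L^2}^2,
$$
where $\Phi(t):=(\partial_t u,-\Delta u)+\tfrac{\gamma}{2}\|(-\Delta)^{3/4}u\|_{L^2}^2+\tfrac{\alpha}{2}\|\nabla u\|_{L^2}^2$. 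A Young-type lower bound $\Phi(t)\ge \tfrac{\gamma}{2}\|(-\Delta)^{3/4}u(t)\|_{L^2}^2-C\|\nabla v(t)\|_{L^2}^2$ combined with the $\xi_v$-estimate of Step~1 and the basic dissipative bound \eqref{2.dis} gives a pointwise estimate
$$
\|u(t)\|_{H^{3/2}}^2\le Q(\|\xi_u(0)\|_{\mathcal{E}_1}+\|g\|)\exp\bigl(Q(\|\xi_u(0)\|_\mathcal{E}+\|g\|)(t+1)\bigr).
$$

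\emph{Step 3: Bootstrap to $H^2$.} The embedding $H^{3/2}(\Omega)\hookrightarrow L^{10}(\Omega)$ (critical 3D Sobolev) and the growth $|f(u)|\le C(1+|u|^5)$ yield $\|f(u(t))\|_{L^2}\le C(1+\|u(t)\|_{H^{3/2}}^5)$, which is exponentially controlled. Rearranging the equation as
$$
-\Delta u(t)=g-\partial_t^2 u(t)-\gamma(-\Delta)^{1/2}\partial_t u(t)-\alpha\partial_t u(t)-f(u(t))
$$
and invoking elliptic regularity ($\|u\|_{H^2}\le C\|\Delta u\|_{L^2}$ in the Dirichlet case, with an extra $\|u\|_{L^2}$ on the torus), every term on the right is controlled by $\|\xi_v(t)\|_\mathcal{E}$, $\|u(t)\|_{H^{3/2}}^5$, and $\|g\|_{L^2}$, which concludes \eqref{4.e1div}.

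\emph{Main obstacle.} The whole argument hinges on the legality of two formal multiplications—by $\partial_tv$ on the differentiated equation in Step~1 and by $-\Delta u$ on \eqref{OriginalSys} in Step~2—neither of which is an admissible test function at the regularity of a weak energy solution. The standard remedy is to carry out all manipulations on Galerkin approximations $u_N$, for which the calculations reduce to a system of ODEs and every estimate is straightforward, and then to pass to the limit $N\to\infty$; the exponential-in-$t$ nature of the target bound is harmless because the constants in Steps~1--3 depend only on $\|\xi_u(0)\|_\mathcal{E}$ and $\|g\|_{L^2}$, which are uniformly controlled along the Galerkin sequence.
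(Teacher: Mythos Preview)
Your proposal is correct, and Step~1 coincides with the paper's argument. The difference lies in how you recover the $H^2$-bound on $u$ from the $\mathcal{E}$-bound on $\xi_v$. You take a two-stage route: a \emph{dynamic} multiplier $-\Delta u$ (Step~2) producing a pointwise $H^{3/2}$-bound via the functional $\Phi$, followed by a bootstrap (Step~3) in which $\|f(u)\|_{L^2}$ is controlled through $H^{3/2}\hookrightarrow L^{10}$ and elliptic regularity then yields $H^2$. The paper is more direct: it multiplies \eqref{OriginalSys} by $-\Delta u(t)$ \emph{at each fixed time} $t$ (no $\frac{d}{dt}\Phi$, no time-integration) and, using the very same sign condition $(f(u),-\Delta u)=\int_\Omega f'(u)|\nabla u|^2\ge -K\|\nabla u\|_{L^2}^2$ that you invoke in Step~2, obtains in one line the pointwise estimate
\[
\|\Delta u(t)\|_{L^2}^2\le C\bigl(\|\xi_v(t)\|_{\mathcal{E}}^2+\|\xi_u(t)\|_{\mathcal{E}}^2+\|g\|_{L^2}^2\bigr).
\]
In other words, the sign you exploit dynamically in Step~2 already delivers $H^2$ directly in the static elliptic identity, so both the intermediate $H^{3/2}$-bound and the separate control of $\|f(u)\|_{L^2}$ are unnecessary. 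Your detour buys nothing extra here, but it is a legitimate alternative; the paper simply replaces your Steps~2--3 by a single inequality.
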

\begin{proof} We give below only the formal derivation of estimate \eqref{4.e1div} which can be justified in a standard way using e.g., the Galerkin approximations. To this end, we differentiate \eqref{OriginalSys} in time and denote $v(t):=\Dt u(t)$. Then this function solves
\begin{equation}\label{4.diffeq}
\Dt^2 v+\alpha\Dt v+\gamma(-\Dx)^{1/2}\Dt v-\Dx v+f'(u)v=0,\ \ \xi_v(0):=(\Dt u(0),\Dt^2 u(0)),
\end{equation}
where $\Dt^2 u(0)$ is defined via
\begin{equation}\label{4.d2t0}
\Dt^2 u(0):=\Dx u(0)-\alpha\Dt u(0)-\gamma(-\Dx)^{1/2}\Dt u(0)-f(u(0))+g.
\end{equation}
Using the embedding $H^2\subset C$, it is not difficult to see that
$$
\|\Dt^2 u(0)\|_{L^2(\Omega)}\le Q(\|\xi_u(0)\|_{\Cal E_1}+\|g\|_{L^2(\Omega)})
$$
and, therefore,
$$
\|\xi_v(0)\|_{\Cal E}\le Q(\|\xi_u(0)\|_{\Cal E_1}+\|g\|_{L^2(\Omega)})
$$
for some monotone increasing function $Q$. Analogously, for every $t\ge0$, we have
\begin{equation}\label{4.vu}
\|\xi_v(t)\|_{\Cal E}\le Q(\|\xi_u(t)\|_{\Cal E_1}+\|g\|_{L^2(\Omega)}),
\end{equation}
where $Q$ is independent of $t$. Vice versa, multiplying equation \eqref{OriginalSys} by $-\Dx u(t)$ (for every fixed $t$), integrating by parts and using that $f'\ge-K$, we end up with
$$
\|\Dx u(t)\|_{L^2(\Omega)}^2\le C\(\|\xi_v(t)\|^2_{\Cal E}+\|u(t)\|^2_{H^1(\Omega)}+\|g\|^2_{L^2(\Omega)}\)
$$
and, therefore,
\begin{equation}\label{4.uv}
\|\xi_u(t)\|_{\Cal E_1}^2\le C\(\|\xi_v(t)\|^2_{\Cal E}+\|\xi_u(t)\|^2_{\Cal E}+\|g\|^2_{L^2(\Omega)}\).
\end{equation}
Thus, to verify \eqref{4.e1div} it is sufficient to estimate the quantity $\|\xi_v(t)\|_{\Cal E}$ only. To this end, we multiply equation \eqref{4.diffeq} by $\Dt v$ and integrate over $\Omega$. Then, arguing as in the derivation of \eqref{4.main}, we end up with
\begin{equation}\label{4.mmain}
\frac d{dt}\|\xi_v(t)\|^2_{\Cal E}+\delta\|\Dt v(t)\|^2_{H^{1/2}_\Delta}\le C(1+\|u(t)\|^4_{L^{12}(\Omega)})\|\xi_v(t)\|^2_{\Cal E},
\end{equation}
for some $\delta>0$. Applying the Gronwall inequality to \eqref{4.mmain} and using the estimate \eqref{4.412} for the $L^4(L^{12})$-norm of $u$ together with \eqref{4.uv} and \eqref{4.vu}, we end up with the desired estimate \eqref{4.e1div} and finish the proof of the proposition.
\end{proof}

The next proposition gives the parabolic smoothing property for the energy solutions.

\begin{prop}
\label{Prop4.smooth}
Let assumptions of Theorem \ref{Th4.uni} hold. Then any energy solution $u(t)$ of problem \eqref{OriginalSys}  possesses the following smoothing property for $t\in(0,1]$:
\begin{equation}\label{4.smo}
t^2\(\|\xi_{\Dt u}(t)\|^2_{\Cal E}+\|\xi_u(t)\|^2_{\Cal E_1}\)\le Q(\|\xi_u(0)\|_{\Cal E})+Q(\|g\|_{L^2(\Omega)}),
\end{equation}
for some monotone increasing function $Q$ which is independent of $t$ and $u$.
\end{prop}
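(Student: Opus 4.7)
The plan is to follow the strategy of Proposition~\ref{Prop4.e1smooth}: formally differentiate \eqref{OriginalSys} in time, set $v:=\Dt u$, and work with equation \eqref{4.diffeq}. All computations are rigorously justified via Galerkin approximations, exactly as in Proposition~\ref{Prop4.e1smooth}. The essential new feature is that $\xi_v(0)=(u_1,\Dt^2u(0))$ is no longer controlled by $\|\xi_u(0)\|_{\mathcal E}$ alone, so instead of multiplying the differentiated equation by $\Dt v$ directly we shall use a time weight $t^2$ vanishing at $t=0$.

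Starting from the basic differential inequality \eqref{4.mmain} for $\|\xi_v\|^2_{\mathcal E}$, multiplication by $t^2$ and the product rule yield
$$
\frac{d}{dt}\bigl(t^2\|\xi_v\|^2_{\mathcal E}\bigr)\le 2t\|\xi_v\|^2_{\mathcal E}+C(1+\|u\|^4_{L^{12}(\Omega)})\bigl(t^2\|\xi_v\|^2_{\mathcal E}\bigr).
$$
Since $\int_0^1(1+\|u(s)\|^4_{L^{12}})\,ds\le Q(\|\xi_u(0)\|_{\mathcal E})+Q(\|g\|_{L^2})$ by \eqref{4.412}, Gronwall's inequality gives
$$
t^2\|\xi_v(t)\|^2_{\mathcal E}\le Q\int_0^t 2s\|\xi_v(s)\|^2_{\mathcal E}\,ds,\qquad t\in(0,1].
$$
The remaining task is the intermediate bound
$$
\int_0^1 s\|\xi_v(s)\|^2_{\mathcal E}\,ds\le Q(\|\xi_u(0)\|_{\mathcal E})+Q(\|g\|_{L^2}).
$$

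To obtain this, I would combine two weighted multiplications: the original equation \eqref{OriginalSys} multiplied by $t\Dt^2u=t\Dt v$, and the differentiated equation \eqref{4.diffeq} multiplied by $tv$. Each identity individually produces only mismatched dissipation on the left: in the first, the integration by parts $-t(\Dx u,\Dt v)=\frac{d}{dt}[t(\Nx u,\Nx v)]-(\Nx u,\Nx v)-t\|\Nx v\|^2$ yields $t\|\Dt v\|^2-t\|\Nx v\|^2$, whereas the second yields $t\|\Nx v\|^2-t\|\Dt v\|^2$. An appropriate linear combination cancels the ``wrong-sign'' terms and leaves $t\|\xi_v\|^2_{\mathcal E}$ coercive on the left, while on the right-hand side: (i) non-derivative time integrals of the form $\int_0^T(\Nx u,\Nx v)\,ds=\tfrac12\|\Nx u\|^2\big|_0^T$, $\int_0^T(f(u),v)\,ds=(F(u),1)\big|_0^T$, $\int_0^T(g,v)\,ds=(g,u)\big|_0^T$, etc., reduce to differences of energy quantities controlled via \eqref{2.dis}; (ii) the nonlinear term $\int_0^T t(f'(u)v,v)\,ds$ is bounded below by $-K\int_0^T t\|v\|^2\,ds\ge-Q$ using $(f'(u)v,v)\ge-K\|v\|^2$ from \eqref{1.f} and $v\in L^\infty(L^2)$; (iii) boundary traces at $t=T$ of the form $T(\Nx u(T),\Nx v(T))$ are absorbed by Young's inequality into a small multiple of $T^2\|\xi_v(T)\|^2_{\mathcal E}$, which is subsequently reabsorbed into the left-hand side of the principal Gronwall inequality.

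Combining the intermediate bound with the preceding Gronwall estimate yields $t^2\|\xi_v(t)\|^2_{\mathcal E}\le Q$ on $(0,1]$, and the equivalence \eqref{4.uv} translates this into the claimed bound $t^2\|\xi_u(t)\|^2_{\mathcal E_1}\le Q$.

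The main obstacle is the intermediate bound itself. Unlike in the parabolic regime $\theta\in(\tfrac12,1)$, where a single $\Dt v$-multiplication of \eqref{4.diffeq} produces the full $\|\xi_v\|^2_{\mathcal E}$-dissipation and standard parabolic smoothing applies directly, at the borderline value $\theta=\tfrac12$ the natural dissipation $\|\Dt v\|^2_{H^{1/2}_\Delta}$ is precisely a half-derivative weaker than $\|\xi_v\|^2_{\mathcal E}$. Producing the missing $\|\Nx v\|^2$-dissipation with the correct sign therefore requires the combined-multiplier strategy above, and the delicate step is the cancellation of the mismatched $\|\Nx v\|^2$-contributions together with a careful accounting of the $t=T$ boundary traces.
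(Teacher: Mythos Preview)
Your reduction to bounding $\int_0^1 s\|\xi_v(s)\|^2_{\Cal E}\,ds$ is correct, but the proposed combined-multiplier argument does not close. As you compute yourself, the multiplication of \eqref{OriginalSys} by $t\Dt v$ contributes $t\|\Dt v\|^2 - t\|\Nx v\|^2$, while the multiplication of \eqref{4.diffeq} by $tv$ contributes $-t\|\Dt v\|^2 + t\|\Nx v\|^2$. These are \emph{exactly opposite}, so every linear combination yields a scalar multiple of $t(\|\Dt v\|^2-\|\Nx v\|^2)$, never the coercive quantity $t\|\xi_v\|^2_{\Cal E}=t(\|\Dt v\|^2+\|\Nx v\|^2)$. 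The damping terms $\gamma(-\Dx)^{1/2}\Dt u$ and $\alpha\Dt u$ do not help either: in both multiplications they become perfect time derivatives and produce no additional sign. Hence the claimed cancellation of the wrong-sign terms cannot occur, and the intermediate bound is not obtained by this route.

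The paper handles the extra term $2t\|\xi_v\|^2_{\Cal E}$ arising from the product rule in a completely different way, by exploiting the two regularity gains that are specific to the present setting: the $H^{1/2}_\Delta$-dissipation on $\Dt v$ already present in \eqref{4.mmain}, and the extra space-time regularity \eqref{2.stest}. Concretely, one writes
\[
2t\|\Nx v\|^2=2\frac{d}{dt}\bigl(t(\Nx u,\Nx v)\bigr)-2(\Nx u,\Nx v)-2t\bigl((-\Dx)^{3/4}u,(-\Dx)^{1/4}\Dt v\bigr),
\]
so that the last term is controlled by $\eb t^2\|\Dt v\|^2_{H^{1/2}_\Delta}+C_\eb\|u\|^2_{H^{3/2}}$; and, by interpolation,
\[
2t\|\Dt v\|^2_{L^2}\le Ct\|\Dt v\|_{H^{-1/2}_\Delta}\|\Dt v\|_{H^{1/2}_\Delta}\le \tfrac{\delta}{2}t^2\|\Dt v\|^2_{H^{1/2}_\Delta}+C\|\Dt^2 u\|^2_{H^{-1/2}_\Delta}.
\]
Both $\|u\|_{L^2(0,1;H^{3/2})}$ and $\|\Dt^2 u\|_{L^2(0,1;H^{-1/2}_\Delta)}$ are already controlled (the latter via Corollary~\ref{Cor2.eneq}), and the $t^2\|\Dt v\|^2_{H^{1/2}_\Delta}$ pieces are absorbed by the dissipation in \eqref{4.mmain}. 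This produces a closed differential inequality for $t^2\|\xi_v\|^2_{\Cal E}-2t(\Nx u,\Nx v)$ directly, without any need for the intermediate bound you attempt.
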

\begin{proof} Again, we give only the formal derivation of estimate \eqref{4.smo} which can be justified, say, by the Galerkin approximations.
Multiplying inequality \eqref{4.mmain} for $v:=\Dt u$ by $t^2$ and using that
\begin{multline*}
2t\|\Nx v(t)\|^2_{L^2(\Omega)}=2\frac d{dt}\(t(\Nx u(t),\Nx v(t))\)-2(\Nx u(t),\Nx v(t))-\\-2t((-\Dx)^{3/4}u(t),(-\Dx)^{1/4}\Dt v(t))
\end{multline*}
and that
\begin{multline*}
2t\|\Dt v(t)\|^2_{L^2(\Omega)}\le Ct\|\Dt v(t)\|_{H^{-1/2}_\Delta}\|\Dt v(t)\|_{H^{1/2}_\Delta}\le\\\le \frac{\delta t^2}{2}\|\Dt v(t)\|^2_{H^{1/2}_\Delta}+C\|\Dt^2 u(t)\|_{H^{-1/2}_\Delta}^2,
\end{multline*}
after the elementary estimates, we get
\begin{multline}\label{4.mmmain}
\frac d{dt}\(t^2\|\xi_v(t)\|^2_{\Cal E}-2t(\Nx u(t),\Nx v(t))\)-\\-C(1+\|u(t)\|^4_{L^{12}(\Omega)})(t^2\|\xi_v(t)\|_{\Cal E}^2)\le C\(\|\Dt^2 u(t)\|^2_{H^{-1/2}_\Delta}+\|u(t)\|^2_{H^{3/2}(\Omega)}\).
\end{multline}
Integrating this inequality in time and using that the norms $\|\Dt u\|_{L^2(0,1;H^{1/2}_{\Delta})}$ and $\|\Dt^2 u\|_{L^2(0,1;H^{-1/2}_\Delta)}$ are under the control (see the proof of Corollary \ref{Cor2.eneq} concerning the second norm) as well as the inequality
$$
|2t(\Nx u(t),\Nx v(t))|\le \frac12 t^2\|\xi_v(t)\|^2_{\Cal E}+2\|\xi_u(t)\|^2_{\Cal E},
$$
 we end up with
$$
t^2\|\xi_v(t)\|^2_{\Cal E}\le 2C\int_0^t(1+\|u(s)\|^4_{L^{12}(\Omega)})s^2\|\xi_v(s)\|^2_{\Cal E}\,ds+ Q(\|\xi_u(0)\|_{\Cal E}+\|g\|_{L^2(\Omega)}).
$$
Using finally that the $L^4(0,1;L^{12}(\Omega))$-norm of $u$ is also under the control, see \eqref{4.412}, and applying the Gronwall inequality to the last estimate, we end up with the desired estimate \eqref{4.smo} for the $\Cal E$-norm of $\xi_v(t)$. The analogous estimate for the $\Cal E_1$-norm of $\xi_u(t)$ is now an immediate corollary of \eqref{4.uv} and the proposition is proved.
\end{proof}
We are now ready to establish the dissipativity of \eqref{OriginalSys} in $\Cal E_1$.

\begin{cor}\label{Cor4.e1dis} Let the assumptions of Theorem \ref{Th4.uni} hold and let, in addition, $\xi_u(0)\in\Cal E_1$. Then, the following estimate holds:
\begin{equation}\label{4.e1dis}
\|\xi_u(t)\|_{\Cal E_1}+\|\xi_{\Dt u}(t)\|_{\Cal E}\le Q(\|\xi_u(0)\|_{\Cal E_1})e^{-\beta t}+Q(\|g\|_{L^2(\Omega)}),
\end{equation}
where  $\beta>0$ and the monotone function $Q$ are independent of $t$ and $\xi_u(0)$.
\end{cor}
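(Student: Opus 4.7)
The strategy is classical: combine the exponential dissipativity in $\Cal E$ (Theorem~\ref{Th2.existence}) with the instantaneous smoothing from $\Cal E$ to $\Cal E_1$ (Proposition~\ref{Prop4.smooth}) to manufacture a uniform absorbing set in $\Cal E_1$, and then patch the short-time interval using the (divergent but finite) bound of Proposition~\ref{Prop4.e1smooth}. The philosophy is that $\Cal E$-dissipativity plus smoothing automatically upgrades to $\Cal E_1$-dissipativity, and the non-dissipative $\Cal E_1$ bound is only used to cover a bounded, logarithmic-in-the-initial-data interval.

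First I would use $\|\cdot\|_\Cal E \le \|\cdot\|_{\Cal E_1}$ together with \eqref{2.dis} to produce an entry time $T_* = T_*(\|\xi_u(0)\|_{\Cal E_1})$, depending only logarithmically on the initial norm through the decay rate $\beta_0$ of Theorem~\ref{Th2.existence}, such that $\|\xi_u(t)\|_\Cal E^2 \le R_0(\|g\|_{L^2})$ for all $t\ge T_*$. Next, for every $\tau\ge T_*$, by autonomy of \eqref{OriginalSys} I may regard $u(\tau+\cdot)$ as a new energy solution with initial datum $\xi_u(\tau)\in\Cal E$; applying Proposition~\ref{Prop4.smooth} at time $s=1$ gives
$$
\|\xi_u(\tau+1)\|_{\Cal E_1}^2+\|\xi_{\Dt u}(\tau+1)\|_\Cal E^2\le Q(\|\xi_u(\tau)\|_\Cal E)+Q(\|g\|_{L^2})\le R_\infty(\|g\|_{L^2}),
$$
a uniform absorbing level depending only on $g$. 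For the remaining short-time interval $t\in[0,T_*+1]$, Proposition~\ref{Prop4.e1smooth} provides the locally finite bound
$$
\|\xi_u(t)\|_{\Cal E_1}^2+\|\xi_{\Dt u}(t)\|_\Cal E^2\le M,\qquad M:=Q(\|\xi_u(0)\|_{\Cal E_1}+\|g\|)\,e^{Q(\|\xi_u(0)\|_{\Cal E_1}+\|g\|)(T_*+2)}.
$$

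To assemble these two regimes into the exponential-decay form \eqref{4.e1dis}, I fix any $\beta\in(0,\beta_0]$ and bound the short-time contribution on $[0,T_*+1]$ by $M\,e^{\beta(T_*+1)}\,e^{-\beta t}$, while using the uniform bound $\sqrt{R_\infty(\|g\|)}$ on $[T_*+1,\infty)$. Adding the two yields the inequality \eqref{4.e1dis} with decay coefficient $\widetilde Q(r):=\sqrt{M(r)}\,e^{\beta(T_*(r)+1)}$ and absorbing part $\sqrt{R_\infty(\|g\|)}$. The one delicate point --- and the only step that really requires care --- is verifying that $\widetilde Q$ is still a genuine monotone function of $r=\|\xi_u(0)\|_{\Cal E_1}$ despite the apparently vicious double-exponential growth coming from Proposition~\ref{Prop4.e1smooth}; this works precisely because $T_*(r)$ grows only like $\beta_0^{-1}\log r$, so both $e^{\beta T_*(r)}$ and the inner factor $e^{Q(r)(T_*(r)+2)}$ reduce to polynomial-type expressions in $r$, and monotonicity is preserved.
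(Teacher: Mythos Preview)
Your proof is correct and follows essentially the same strategy as the paper: apply the smoothing estimate \eqref{4.smo} (in the form \eqref{4.tsmo}) on top of the $\Cal E$-dissipative estimate \eqref{2.dis} to handle large times, and invoke Proposition~\ref{Prop4.e1smooth} for short times. The paper's version is terser and splits at $t=1$ rather than at the absorbing time $T_*+1$; this sidesteps your ``double-exponential'' discussion, since Proposition~\ref{Prop4.e1smooth} is then only used on the fixed interval $[0,1]$. Your remark that $e^{Q(r)(T_*(r)+2)}$ becomes ``polynomial-type'' is not quite accurate (it is more like $Q(r)^{Q(r)}$), but this is harmless: all you actually need is that the resulting $\widetilde Q$ is monotone in $r$, which it certainly is.
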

\begin{proof} Indeed, according to Proposition \ref{Prop4.smooth},
\begin{equation}\label{4.tsmo}
\|\xi_u(t+1)\|_{\Cal E_1}\le Q(\|\xi_u(t)\|_{\Cal E})+Q(\|g\|_{L^2(\Omega)})
\end{equation}
for some monotone function $Q$ independent of $t$ and $u$. Combining this estimate with the dissipative estimate \eqref{2.dis}, we get the desired
estimate \eqref{4.e1dis} for $t\ge1$. To obtain estimate \eqref{4.e1dis} for $t\le1$, it is sufficient to use Proposition \ref{Prop4.e1smooth}. Thus, the corollary is proved.
\end{proof}

\section{The  attractors}\label{s5}
In that section, we prove the existence of global and exponential attractors for problem \eqref{OriginalSys} under the additional assumption that any energy solution possesses the additional regularity \eqref{2.stest}.
We start with summarizing the important properties of the energy solutions obtained above. First, according to Theorem \ref{Th4.uni}, in that case problem \eqref{OriginalSys} generates a semigroup $S(t)$ in the energy phase space $\Cal E$ via
\begin{equation}
S(t):\Cal E\to\Cal E,\ \ S(t)\xi:=\xi_u(t),
\end{equation}
where $\xi_u(t)$ is a unique energy solution of \eqref{OriginalSys} such that $\xi_u(0)=\xi\in\Cal E$. Second, this semigroup is dissipative due to estimate \eqref{2.dis} and is locally Lipschitz continuous due to estimate \eqref{4.lip}. Third, due to estimates \eqref{4.e1dis} and \eqref{4.tsmo}, the ball $\Cal B_{R}$ of a sufficiently large radius $R$ of $\Cal E_1$ will be a compact in $\Cal E$ (since $\Cal{E}$ is reflexive and $\Cal{B}_R$ is convex, $\Cal B_R$ is closed in $\Cal E$ as well) absorbing set for this semigroup, i.e., for every bounded set $B$ in $\Cal E$ there is time $T=T(B)$ such that
$$
S(t)B\subset {\Cal B}_R, \ \ \forall t\ge T.
$$
As usual, based on $\Cal B_{R}$ one can construct the {\it semi-invariant} compact absorbing set for $S(t)$ via
\begin{equation}\label{5.s-absorb}
\Cal B:=\cup_{t\ge0}S(t)\Cal B_{R},\ \ S(t)\Cal B\subset \Cal B.
\end{equation}
Indeed, since $\Cal B_R\subset\Cal B$, the set $\Cal B$ is also an absorbing set for $S(t)$. This set is bounded in $\Cal E_1$ due to estimate \eqref{4.e1dis} and its closedness in $\Cal E$ (and, therefore, in $\Cal E_1$ as well) is evident. Thus, $\Cal B$ is a compact set in $\Cal E$ and is a semi-invariant absorbing set for the semigroup $S(t)$.
\par
For the convenience of the reader, we now recall the definition of a global attractor, see \cite{BV,MZDafer2008,TemamDS} for more details.
\begin{Def}\label{Def5.attr} A set $\Cal A\subset\Cal E$ is a global attractor for the semigroup $S(t)$ in $\Cal E$ if:
\par
1) The set $\Cal A$ is compact in $\Cal E$.
\par
2) The set $\Cal A$ is strictly invariant: $S(t)\Cal A=\Cal A$, $t\ge0$.
\par
3) The set $\Cal A$ is an attracting set for the semigroup $S(t)$ in $\Cal E$, i.e., for any bounded set $B$ in $\Cal E$ and any neighborhood $\Cal O(\Cal A)$ of the attractor $\Cal A$ in $\Cal E$ there is time $T=T(B,\Cal E)$ such that
$$
S(t)B\subset \Cal O(\Cal A),\ \ t\ge T.
$$
\end{Def}
Remind that the third property in Definition \ref{Def5.attr} can be reformulated in the equivalent way using the so-called non-symmetric Hausdorff distance between sets, namely,
\begin{equation}\label{5.conv}
\lim_{t\to\infty}\dist_{\Cal E}(S(t)B,\Cal A)=0,
\end{equation}
for any bounded set $B\subset\Cal E$. Here and below the Hausdorff distance between sets $X$ and $Y$ in the space $\Cal E$ is defined as follows:
$$
\dist_{\Cal E}(X,Y):=\sup_{x\in X}\inf_{y\in Y}\|x-y\|_{\Cal E}.
$$
\begin{prop} Let the assumptions of Theorem \ref{Th4.uni} hold. Then the solution semigroup $S(t)$ in $\Cal E$ of problem \eqref{OriginalSys} possesses the global attractor $\Cal A$ which is bounded in $\Cal E_1$ and is generated by all  trajectories of $S(t)$ which are defined for all $t\in\R$ and bounded in $\Cal E$:
\begin{equation}\label{6.k}
\Cal A=\Cal K\big|_{t=0},
\end{equation}
where $\Cal K\in C_b(\R,\Cal E)$ is the set all bounded solutions of \eqref{OriginalSys} defined for all $t\in\R$.
\end{prop}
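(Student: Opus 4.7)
The plan is to invoke the standard abstract existence theorem for global attractors (see, e.g., \cite{BV,TemamDS}), which requires two ingredients: (i) the existence of a compact absorbing set for the semigroup $S(t)$ in $\Cal E$, and (ii) the continuity of the operators $S(t)\colon\Cal E\to\Cal E$ for each fixed $t\ge0$. Both ingredients have effectively been assembled in the preceding discussion, so the proposition becomes a formal verification.

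First I would record that the semi-invariant set $\Cal B$ constructed in \eqref{5.s-absorb} is a compact absorbing set for $S(t)$ in $\Cal E$: it is bounded in $\Cal E_1$ by \eqref{4.e1dis}, closed and convex, hence compact in $\Cal E$ by the compact embedding $\Cal E_1\hookrightarrow\Cal E$, and it absorbs every bounded set $B\subset\Cal E$ since $\Cal B_R$ does. The continuity of $S(t)$ as a map from $\Cal E$ to $\Cal E$ is a direct consequence of the Lipschitz estimate \eqref{4.lip} of Theorem~\ref{Th4.uni}, which even gives local Lipschitz continuity uniformly on bounded sets.

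Having both ingredients, I would then define
\begin{equation*}
\Cal A:=\omega(\Cal B)=\bigcap_{s\ge0}\overline{\bigcup_{t\ge s}S(t)\Cal B}^{\,\Cal E}.
\end{equation*}
The standard abstract theorem then yields that $\Cal A$ is nonempty, compact in $\Cal E$, strictly invariant under $S(t)$, and attracts every bounded set in $\Cal E$ in the sense of \eqref{5.conv}; thus $\Cal A$ satisfies Definition~\ref{Def5.attr}. Since $\Cal B$ is semi-invariant, $\omega(\Cal B)\subset \Cal B$, and the boundedness of $\Cal B$ in $\Cal E_1$ gives the boundedness of $\Cal A$ in $\Cal E_1$ as claimed.

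Finally, to establish the representation \eqref{6.k}, I would use the strict invariance $S(t)\Cal A=\Cal A$ for $t\ge0$: for every $\xi_0\in\Cal A$ one can iteratively choose preimages $\xi_{-n}\in\Cal A$ with $S(n)\xi_{-n}=\xi_0$ and, using uniqueness, glue the forward orbits into a complete trajectory $u(\cdot)\in C_b(\R,\Cal E)$ with $u(0)=\xi_0$, so $\Cal A\subset\Cal K\big|_{t=0}$. Conversely, any trajectory $u\in\Cal K$ satisfies $u(0)=S(t)u(-t)$ with $\{u(-t)\}_{t\ge0}$ bounded in $\Cal E$, so by the attraction property $\dist_{\Cal E}(u(0),\Cal A)=0$, i.e., $u(0)\in\Cal A$. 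No step presents a serious obstacle; the only point that requires care is checking that the absorbing set $\Cal B$ is genuinely closed in $\Cal E$ and compact there, which follows from convexity and the reflexivity of $\Cal E_1$ together with the compact embedding $\Cal E_1\hookrightarrow\Cal E$.
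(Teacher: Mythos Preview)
Your approach is essentially identical to the paper's: invoke the abstract attractor existence theorem by checking (a) a compact absorbing set and (b) continuity of $S(t)$, then read off boundedness in $\Cal E_1$ from $\Cal A\subset\Cal B$ and the representation \eqref{6.k} from strict invariance. The paper does exactly this, only more tersely, citing \cite{BV} and leaving the description \eqref{6.k} as a ``standard corollary'' without spelling out the two inclusions as you do.

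One small slip: you write that $\Cal B$ is convex, and later lean on convexity plus reflexivity to get closedness/compactness. But $\Cal B=\bigcup_{t\ge0}S(t)\Cal B_R$ is a union of nonlinear images of the ball $\Cal B_R$ and there is no reason for it to be convex; it is $\Cal B_R$, not $\Cal B$, that is convex. Fortunately convexity is not needed: boundedness of $\Cal B$ in $\Cal E_1$ already gives precompactness in $\Cal E$ via the compact embedding, so either work with $\overline{\Cal B}^{\,\Cal E}$ (still bounded in $\Cal E_1$, still semi-invariant by continuity of $S(t)$) or simply note, as the paper does, that closedness is immediate.
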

\begin{proof} According to the abstract attractor's existence theorem, see e.g., \cite{BV}, we need to check that a) The solution semigroup $S(t)$ possesses a compact absorbing set; b) the operators $S(t)$ are continuous in $\Cal E$ for every fixed $t$. Since both of these assertions are already verified above, the existence of the global attractor is also verified. Since the constructed absorbing set $\Cal B$ is bounded in $\Cal E_1$ and the attractor is a subset of $\Cal B$, it is also bounded in $\Cal E_1$. Finally, the representation \eqref{6.k} is also a standard corollary of the attractor's existence theorem. Thus, the proposition is proved.
\end{proof}
As the next step, we intend to verify the finite-dimensionality of the global attractor $\Cal A$. To this end, we recall the definition of the fractal (box-counting) dimension.

\begin{Def} Let $K$ be a compact set in a metric space $\Cal E$. By Hausdorff criterium, for every $\eb>0$, $K$ can be covered by finitely-many balls of radius $\eb$ in $\Cal E$. Let $N_\eb(K,\Cal E)$ be the minimal number of such balls which is enough to cover $\Cal E$. Then, the fractal dimension of $K$ is defined as follows:
\begin{equation}\label{5.frac}
\dim_f(K,\Cal E):=\limsup_{\eb\to0}\frac{\log N_\eb(K,\Cal E)}{\log\frac1\eb}.
\end{equation}
\end{Def}
Of course, $\dim_f(K,\Cal E)$ a priori can be infinite if $\Cal E$ is infinite-dimensional.
We also remind that, in the case when $K$ is a finite-dimensional Lipschitz manifold in $\Cal E$, the fractal dimension coincides with the usual dimension, but for  irregular sets (which is often the case where $K=\Cal A$ is a global attractor), the fractal dimension may be not integer, see e.g., \cite{robinson} for more details.
\par
We check the finite-dimensionality of the global attractor $\Cal A$ for problem \eqref{OriginalSys}  by constructing one more important object - the so-called exponential attractor which has been introduced in \cite{EFNT} in order to overcome the main drawback of a global attractor, namely, the absence of control for the rate of convergence in \eqref{5.conv}.

\begin{Def} A set $\Cal M$ is an exponential attractor for the semigroup $S(t)$ in $\Cal E$ if the following conditions are satisfied:
\par
1) The set $\Cal M$ is compact in $\Cal E$.
\par
2) The set is $\Cal M$ is semi-invariant: $S(t)\Cal M\subset\Cal M$.
\par
3) The set $\Cal M$ has finite fractal dimension in $\Cal E$.
\par
4) The set $\Cal M$ attracts exponentially the images of bounded sets, i.e., for every bounded set $B$ in $\Cal E$,
\begin{equation}\label{5.expattr}
\dist_{\Cal E}(S(t)B,\Cal M)\le Q(\|B\|_{\Cal E})e^{-\beta t},\ \ t\ge0,
\end{equation}
for some positive $\beta$ and monotone function $Q$ which are independent of $t$.
\end{Def}
Roughly speaking, the exponential rate of attraction \eqref{5.expattr} is achieved by adding to the global attractor a number of "metastable" trajectories (which approach it too slowly) and the non-trivial result of the exponential attractors theory is the possibility to do that without destroying the finite-dimensionality (in almost all cases where the finite-dimensionality of the global attractor is established, see \cite{MZDafer2008} for more details). The control of the rate of convergence \eqref{5.expattr}, in particular, makes an exponential attractor much more robust with respect to perturbations. For instance, in contrast to a global one, an exponential attractor is as a rule upper and lower semincntinuous (and even H\"older continuous) with respect to the perturbation parameter, see \cite{MZDafer2008}. As the price to pay, an exponential attractor is not {\it unique} (similar to center/inertial manifolds), although this drawback can be partially overcame using the proper selection
of one-valued branches of exponential attractors in dependence of the perturbation parameters, see \cite{EMZ05,MZDafer2008} and the references therein for more details.
\par
The next theorem, which establishes the existence of an exponential attractor for problem \eqref{OriginalSys}, can be considered as the main result of the section.
\begin{theorem}\label{Th5.expoattr}
Let assumptions of Theorem \ref{Th4.uni} hold.  Then the solution semigroup associated with problem \eqref{OriginalSys} possesses an exponential attractor $\Cal M$ which is bounded in the space $\Cal E_1$.
\end{theorem}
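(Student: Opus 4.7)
The plan is to invoke the standard abstract existence theorem for exponential attractors (see~\cite{EFNT,EMZ05,MZDafer2008}), whose hypotheses are: a compact semi-invariant absorbing set $\Cal B$ in $\Cal E$; H\"older continuity of $(t,\xi)\mapsto S(t)\xi$ on $[0,t^*]\times\Cal B$ into $\Cal E$; and a smoothing-plus-contraction decomposition of the time-$t^*$ map $S(t^*)$ on $\Cal B$ for some fixed $t^*>0$. The first ingredient is supplied by~\eqref{5.s-absorb} together with Corollary~\ref{Cor4.e1dis}: $\Cal B$ is semi-invariant, absorbing and bounded in the compactly embedded space $\Cal E_1$, hence compact in $\Cal E$. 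The second ingredient follows from the Lipschitz estimate~\eqref{4.lip} combined with the regularity $\Dt^2 u\in L^2(0,t^*;H^{-1/2}_\Delta)$ from Corollary~\ref{Cor2.eneq}, interpolated with the uniform $\Cal E_1$-bound on $\Cal B$. The main task is thus to establish the third ingredient.

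For $\xi^1,\xi^2\in\Cal B$ with trajectories $u_1,u_2$, I will split $v:=u_1-u_2$ as $v=v_L+v_N$, where $v_L$ solves the homogeneous linear equation
\begin{equation*}
\Dt^2 v_L+\alpha \Dt v_L+\gamma(-\Dx)^{1/2}\Dt v_L-\Dx v_L=0,\qquad \xi_{v_L}(0)=\xi^1-\xi^2,
\end{equation*}
and $v_N:=v-v_L$ solves the same linear equation with zero initial data and right-hand side $h(t):=-(f(u_1(t))-f(u_2(t)))$. A standard energy estimate for the linear fractionally damped wave equation (multiplying by $\Dt v_L+\e v_L$ for small $\e>0$) yields $\|\xi_{v_L}(t)\|_{\Cal E}\le Ce^{-\beta t}\|\xi^1-\xi^2\|_{\Cal E}$, so $t^*$ can be chosen with $Ce^{-\beta t^*}\le \tfrac12$.

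For the inhomogeneous piece, I would exploit that $\Cal B$ is bounded in $\Cal E_1$ and that $H^2(\Omega)\hookrightarrow L^\infty(\Omega)$ in three dimensions, giving uniform $L^\infty$-bounds on $u_1,u_2$. Expanding $\Nx(f(u_1)-f(u_2))=f'(u_1)\Nx v+(f'(u_1)-f'(u_2))\Nx u_2$ and using $|f'(s)|\le C(1+|s|^4)$, $|f''(s)|\le C(1+|s|^3)$ together with H\"older's inequality and the embeddings $H^1\hookrightarrow L^6$, $H^2\hookrightarrow W^{1,6}\cap L^\infty$, I obtain
\begin{equation*}
\|h(t)\|_{H^1(\Omega)}\le C_{\Cal B}\|v(t)\|_{H^1(\Omega)}.
\end{equation*}
A higher-order energy estimate for $v_N$ (multiplying the equation by $-\Dx \Dt v_N$, integrating by parts, and using the zero initial data) combined with the Lipschitz bound~\eqref{4.lip} then produces
\begin{equation*}
\|\xi_{v_N}(t^*)\|_{\Cal E_1}\le L\|\xi^1-\xi^2\|_{\Cal E},
\end{equation*}
with $L=L(\Cal B,t^*)$.

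Combining the two bounds yields the desired decomposition of $S(t^*)\xi^1-S(t^*)\xi^2$ as a $\tfrac12$-contraction in $\Cal E$ plus a term whose $\Cal E_1$-norm is bounded by $L\|\xi^1-\xi^2\|_{\Cal E}$; since $\Cal E_1\hookrightarrow\Cal E$ compactly, the abstract theorem delivers an exponential attractor $\Cal M\subset\Cal B$, whose boundedness in $\Cal E_1$ is inherited from that of $\Cal B$. The main technical obstacle is the $H^1$-estimate on $h$: the quintic-growth factors in $f'(u_1)-f'(u_2)$ must be controlled against the $H^2$-factor $\Nx u_2$, and it is precisely the $L^\infty$-bound for $u_i$ furnished by the $\Cal E_1$-regularity on $\Cal B$ that tames these terms, reducing the estimate to a routine H\"older--Sobolev computation in three dimensions.
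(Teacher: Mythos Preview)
Your overall strategy is sound and is a genuine alternative to the paper's argument, but as written it invokes one hypothesis the paper does not assume.

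\textbf{Comparison with the paper.} The paper does not split $v=v_L+v_N$. Instead it proves a single \emph{squeezing} estimate for the full difference (Lemma~\ref{Lem5.difsmo}):
\[
\|S(1)\xi_1-S(1)\xi_2\|_{\Cal E_{1/2}}\le L\|\xi_1-\xi_2\|_{\Cal E},\qquad \Cal E_{1/2}:=[H^{3/2}\cap H^1_0]\times H^{1/2}_\Delta,
\]
and then applies the abstract result of \cite{EMZ00}. The key observation is the same as yours---on $\Cal B$ the embedding $H^2\subset L^\infty$ gives $\|f(u_1)-f(u_2)\|_{L^2}\le C\|v\|_{L^2}$---but only this $L^2$ bound on the nonlinear difference is needed, because the target space is $\Cal E_{1/2}$ rather than $\Cal E_1$. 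The estimate is obtained by multiplying \eqref{differEq} successively by $\Dt v$, by $(-\Dx)^{1/2}v$, and by $t(-\Dx)^{1/2}\Dt v$. Your contraction-plus-compact decomposition is the more classical route and produces a stronger compact piece (in $\Cal E_1$ instead of $\Cal E_{1/2}$); the paper's argument is shorter and, crucially, needs only $f\in C^1$.

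\textbf{The gap.} To place $h=-(f(u_1)-f(u_2))$ in $H^1$ you use $|f''(s)|\le C(1+|s|^3)$. The standing assumptions \eqref{1.f} (carried through Theorem~\ref{Th4.uni}) give only $f\in C^1$ with growth control on $f'$; no bound on $f''$ is assumed. Without local Lipschitz continuity of $f'$ the term $(f'(u_1)-f'(u_2))\Nx u_2$ cannot be controlled by $C\|v\|_{H^1}$, and the multiplier $-\Dx\Dt v_N$ is then not justified. The repair is easy: either add $f\in C^2$ with the stated growth on $f''$ as an extra hypothesis, or downgrade the target for $v_N$ to $\Cal E_{1/2}$ and use only the $L^2$ bound on $h$ together with the multiplier $(-\Dx)^{1/2}\Dt v_N$---which is precisely what the paper does for the undivided difference.

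A minor simplification for the time regularity: on $\Cal B$, estimate \eqref{4.e1dis} gives $\xi_{\Dt u}\in L^\infty(\R_+;\Cal E)$ directly, so $t\mapsto S(t)\xi$ is Lipschitz into $\Cal E$ without any interpolation.
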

\begin{proof} As usual, it is enough to construct an exponential attractor for the semigroup $S(t)$ restricted to the semi-invariant absorbing set $\Cal B$ defined by \eqref{5.s-absorb} only. Also as usual, we start with constructing the exponential attractor $\Cal M_d$ for the map $S=S(1):\Cal B\to\Cal B$ which will be upgraded after that to the desired exponential attractor for the case of continuous time. To this end, we need the following lemma.

\begin{lemma}\label{Lem5.difsmo} Let the above assumptions hold. Then, for any $\xi_1,\xi_2\in\Cal B$, the following is true:
\begin{equation}
\|S(1)\xi_1-S(1)\xi_2\|_{\Cal E_{1/2}}\le L\|\xi_1-\xi_2\|_{\Cal E},
\end{equation}
where $\Cal E_{1/2}:=[H^{3/2}(\Omega)\cap H^1_0(\Omega)]\times H^{1/2}_\Delta$ and the constant $L$ is independent of $\xi_1,\xi_2\in\Cal B$.
\end{lemma}
\begin{proof}[Proof of the lemma] Indeed, let $\xi_{u_i}(t):=S(t)\xi_i$, $i=1,2$, be two trajectories starting from $\xi_1,\xi_2\in\Cal B$ and let $v(t)=u_1(t)-u_2(t)$. Then, this function satisfies equation \eqref{DifferEq*vt}. Moreover, using that $\Cal B$ is bounded in $\Cal E_1$ and the embedding $H^2\subset C$, analogously to \eqref{nonlinTermEst}, we derive that
\begin{equation}\label{5.dif12}
\|f(u_1)-f(u_2)\|_{L^2(\Omega)}\le C\|u_1-u_2\|_{L^2(\Omega)},
\end{equation}
where $C$ is independent of $\xi_i\in\Cal B$. Applying Cauchy-Schwartz inequality to the term on the right of \eqref{DifferEq*vt}, using\eqref{5.dif12}, we have
$$
\frac{d}{dt}\|\xi_v(t)\|_{\Cal E}^2+\|\Dt v(t)\|^2_{H^{1/2}_\Delta}\le C_1\|\xi_v(t)\|^2_{\Cal E}
$$
and, due to the Gronwall inequality, we end up with
\begin{equation}\label{5.lip}
\|\xi_v(t)\|^2_{\Cal E}+\int_0^t\|\Dt v(t)\|^2_{H^{1/2}_\Delta}\,dt\le C_2\|\xi_v(0)\|^2_{\Cal E}e^{Kt}
\end{equation}
for some positive $C_2$ and $K$ which are independent of $\xi_i\in\Cal B$. In addition, multiplying equation \eqref{differEq} by $(-\Delta)^\frac{1}{2}v$ and using \eqref{5.dif12} and \eqref{5.lip}, analogously to \eqref{2.long}, we get
$$
\int_0^t\|v(s)\|^2_{H^{3/2}(\Omega)}\,ds\le C_2\|\xi_v(0)\|^2_{\Cal E}e^{Kt}
$$
and, therefore,
\begin{equation}\label{5.intsm}
\int_0^t\|\xi_v(s)\|^2_{\Cal E^{1/2}}\,ds\le C_2\|\xi_v(0)\|^2_{\Cal E}e^{Kt}.
\end{equation}
Finally, multiplying equation \eqref{differEq} by $t(-\Dx)^{1/2}\Dt v$ and using again \eqref{5.dif12}, we derive
$$
\frac d{dt}(t\|\xi_v(t)\|^2_{\Cal E^{1/2}})\le  C_3\|\xi_v(t)\|^2_{\Cal E^{1/2}}.
$$
Integrating this inequality in time and using \eqref{5.intsm}, we have
$$
t\|\xi_v(t)\|^2_{\Cal E^{1/2}}\le C_4\|\xi_v(0)\|^2_{\Cal E}e^{Kt}
$$
and the lemma is proved.
\end{proof}
We are now ready to prove the theorem. Indeed, since the embedding $\Cal E_{1/2}\subset\Cal E$ is compact, Lemma \ref{Lem5.difsmo} gives the existence of an exponential attractor $\Cal M_d$ for the discrete semigroup generated by the map $S=S(1)$ on $\Cal B$, see \cite{EMZ00}. The exponential attractor $\Cal M$ for the continuous semigroup can be then obtained via the standard formula
\begin{equation}\label{5.contexp}
\Cal M:=\cup_{t\in[0,1]}S(t)\Cal M_d.
\end{equation}
To guarantee that this set has finite fractal dimension (the other properties of the exponential attractor follow immediately from the fact that $\Cal M_d$ is an exponential attractor for discrete semigroup), it remains to check that the map $(t,\xi)\to S(t)\xi$ is uniformly Lipschitz continuous on $[0,1]\times\Cal B$. The uniform Lipschitz continuity with respect to $\xi$ is guaranteed by \eqref{5.lip} and the Lipschitz continuity
in time follows from the fact that $\xi_{\Dt u}(t)$ is uniformly bounded in $\Cal E$ for any trajectory $\xi_u(t)$ starting from $\xi_u(0)\in\Cal B$, see estimate \eqref{4.e1dis}. Thus, \eqref{5.contexp} is indeed the desired exponential attractor and the theorem is proved.
\end{proof}

To conclude this section, we consider the case when $\alpha=0$ in \eqref{OriginalSys}. In the case of Dirichlet boundary conditions, it does not change anything since, due to the Poincare inequality,
$$
\|\Dt u\|_{H^{1/2}_\Delta}^2\le C\|(-\Dx)^{1/4}\Dt u\|^2_{L^2},
$$
and the term $\gamma(-\Dx)^{1/2}\Dt u$ is enough for energy dissipation. Thus, in that case, all results obtained above remain true for $\alpha=0$ as well.
\par
The case of periodic boundary conditions is more delicate. Indeed, in that case the dissipation vanishes at the spatially homogeneous mode and the dissipative estimate \eqref{2.dis} can be a priori lost. Moreover, it is indeed lost in two elementary cases. First is the case where the nonlinearity $g$ is spatially homogeneous: $g\equiv const$. Then equation \eqref{OriginalSys} possesses spatially homogeneous solutions $\bar u(t,x):=\bar u(t)$ and they solve the ODE
\begin{equation}\label{5.conserv}
\frac{d^2}{dt^2}\bar u(t)+f(\bar u(t))=g
\end{equation}
which is clearly {\it not} dissipative and does not possess a global attractor.
\par
The second one is the case when $f(u):=Lu$, $L>0$, is {\it linear}. In that case, the spatial average $\bar u(t):=\int_\Omega u(t,x)\,dx$ of the solution \eqref{OriginalSys} satisfies equation \eqref{5.conserv} with the spatial average $\bar g$ of the external force $g$ in the right-hand side (instead of $g$), thus the dissipation is again lost. As the next proposition shows, \eqref{OriginalSys} will be nevertheless dissipative in other cases.

\begin{prop}\label{Prop5.Lyap} Let $\alpha=0$, $\Omega=\Bbb T^3$ and the rest of conditions of Theorem \ref{Th2.existence} be satisfied.
Let, in addition, the right-hand side $g$ be not a constant identically and the graph of nonlinearity $f$ do not contain flat segments, i.e., for any $a\in\R$, the set $(f')^{-1}(a)$ be nowhere dense in $\R$.
 Then, the energy functional \eqref{2.energy} is a global Lyapunov function for the solution semigroup $S(t)$ generated by \eqref{OriginalSys} in the energy space $\Cal E$.
\end{prop}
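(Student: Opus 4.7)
The plan is to verify the three defining properties of a global Lyapunov function on $\Cal E$: continuity of $E$, monotonicity along trajectories, and constancy forcing stationarity. Continuity follows from the embedding $H^1(\Bbb T^3)\subset L^6(\Bbb T^3)$ together with the growth bound $|F(u)|\le C(1+|u|^6)$ implied by \eqref{1.f}. Monotone non-increase is immediate from the energy identity \eqref{2.eneq} with $\alpha=0$: $\frac{d}{dt}E(u(t),\Dt u(t))=-\gamma\|(-\Dx)^{1/4}\Dt u(t)\|_{L^2}^{2}\le 0$. The content lies in the third property: I must show that if $t\mapsto E(u(t),\Dt u(t))$ is constant on $[0,\infty)$, then $\xi_u(0)$ is a stationary solution.

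Assuming such constancy, I would first exploit $\|(-\Dx)^{1/4}\Dt u(t)\|_{L^2}=0$ for a.e.\ $t$. On the torus this forces $\Dt u(t,\cdot)$ to be spatially constant for a.e.\ $t$, equal to some scalar $v(t)$, and integrating in time yields the structural ansatz
\[
u(t,x)=u_0(x)+\phi(t),\qquad \phi(t):=\int_0^t v(s)\,ds,
\]
valid in $L^2(\Bbb T^3)$ for every $t\ge 0$. Invoking the parabolic smoothing Proposition \ref{Prop4.smooth} then promotes $u(t)$ into $\Cal E_1$ for every $t>0$; since $\phi(t)$ is a spatial constant, this upgrades $u_0$ to $H^2(\Bbb T^3)\subset C(\Bbb T^3)$. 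Continuity of $u_0$ is precisely what makes the pointwise-in-$x$ arguments below legitimate.

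Substituting the ansatz into \eqref{OriginalSys} (and using that $(-\Dx)^{1/2}v(t)=0$) yields, for every $x\in\Bbb T^3$ and a.e.\ $t$,
\[
\phi''(t)+f(u_0(x)+\phi(t))=g(x)+\Dx u_0(x).
\]
Subtracting this identity at two points $x_1,x_2\in\Bbb T^3$ eliminates $\phi''(t)$ and shows that the map $s\mapsto f(u_0(x_1)+s)-f(u_0(x_2)+s)$ is constant on the image $\phi([0,\infty))$, which is a closed interval $I\subset\R$ by continuity of $\phi$. If $I$ were non-degenerate, differentiation in $s\in I$ would give $f'(u_0(x_1)+s)=f'(u_0(x_2)+s)$ for all $x_1,x_2$, so that $x\mapsto f'(u_0(x)+s_0)$ is constant on $\Bbb T^3$ for any fixed $s_0\in I$. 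The image $u_0(\Bbb T^3)+s_0$ is a closed interval in $\R$ (continuous image of a connected compact) contained in some level set $(f')^{-1}(a)$; the nowhere-density hypothesis forces this interval to collapse to a point, so $u_0$ is spatially constant, and then the displayed equation immediately forces $g$ to be constant as well, contradicting the hypothesis.

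Consequently $I$ must be a single point, so $\phi\equiv 0$, whence $\Dt u\equiv 0$ and $u(t)\equiv u_0$ solves $-\Dx u_0+f(u_0)=g$, identifying $\xi_u(0)=(u_0,0)$ as a fixed point of $S(t)$. The main obstacle I expect is not a single step in isolation, but the rigorous justification of the pointwise-in-$(t,x)$ manipulations given only the bare $\Cal E$-valued regularity of energy solutions; the smoothing Proposition \ref{Prop4.smooth} is the crucial tool that upgrades the trajectory enough so that the pointwise evaluation of $u_0$, the $s$-differentiation on the interval $I$, and the application of the nowhere-density hypothesis become entirely meaningful.
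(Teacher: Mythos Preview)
Your proof is correct and follows essentially the same route as the paper's: both use the energy identity to force $\Dt u$ spatially constant, write $u(t,x)=u_0(x)+\phi(t)$, invoke the smoothing property to make $u_0$ continuous, deduce that $f'(u_0(\cdot)+s_0)$ is constant on $\Bbb T^3$, and then use the nowhere-density hypothesis to force $u_0$ (and hence $g$) constant. The only cosmetic difference is that the paper differentiates \eqref{OriginalSys} in time and evaluates at a point $t_0$ with $\phi'(t_0)\ne 0$, whereas you subtract the equation at two spatial points and differentiate the resulting identity in the variable $s=\phi(t)$; your variant is marginally cleaner in that it avoids introducing $\phi'''$.
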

\begin{proof} Indeed, due to \eqref{2.enint}, the energy functional is non-increasing along the trajectories of \eqref{OriginalSys}. Thus, we only need to check that the equality
\begin{equation}\label{5.eqen}
E(u(T),\Dt u(T))=E(u(0),\Dt u(0)),
\end{equation}
for some solution $u(t)$ and some $T>0$, implies that $u(t)\equiv u_0$ is an equilibrium. To prove this fact, we note that, due to \eqref{2.enint},
$$
\int_0^T\|(-\Dx)^{1/4}\Dt u(t)\|^2_{L^2(\Omega)}\,dt=0.
$$
Thus, $\Dt u(t)$ is spatially homogeneous, that is $\Dt u(t)$ does not depend on $x$ and therefore, the solution $u(t,x)$ has the form
$$
u(t,x)=\langle u(t)\rangle-\langle u_0\rangle +u_0(x),
$$
where $\langle u(t)\rangle=\frac{1}{|\Omega|}(u(t),1)$ and $u_0$ is our initial data. Differentiating \eqref{OriginalSys} in time, we see that
\begin{equation}\label{5.f}
f'(u(t))\frac d{dt}\langle u(t)\rangle=-\frac {d^3}{dt^3}\langle u(t)\rangle.
\end{equation}
Assume now that $\langle u(t)\rangle$ is not a constant. Then, there exists time $t_0\in(0,T)$ such that $\frac{d}{dt}\langle u(t_0)\rangle\ne0$. From \eqref{5.f}, we  conclude that
$$
f'(\langle u(t_0)\rangle -\langle u_0\rangle +u_0(x))=a:=-\frac {d^3}{dt^3}\langle u(t_0)\rangle/\frac{d}{dt}\langle u(t_0)\rangle
$$
for all $x\in\Bbb T^3$. Moreover, due to the smoothing property, $u_0(x)\in H^2(\Bbb T^3)\subset C(\Bbb T^3)$. Since $(f')^{-1}(a)$ is nowhere dense, we conclude that $u_0(x)\equiv const$. But then $g(x)$ also must be a constant which contradicts the assumptions of the proposition. This contradiction proves that $\Dt u(t)\equiv0$, so the energy functional is a global Lyapunov function of \eqref{OriginalSys}. Proposition \ref{Prop5.Lyap} is proved.
\end{proof}
\begin{remark} The existence global Lyapunov function together with the evident fact that the set of equilibria is bounded in $\Cal E$ and with the asymptotic compactness (which is also immediate in our case due to the smoothing property) implies the dissipativity and the existence of a global attractor, see e.g., \cite{hale} for the details. Thus, under the assumptions of Proposition \ref{Prop5.Lyap}, we a posteriori have the dissipative estimate \eqref{2.dis} as well as the global and exponential attractors existence. However, in contrast to the case of $\alpha>0$, we do not know how to obtain \eqref{2.dis} directly from the energy-type estimates.
\end{remark}

\section{The case of Dirichlet boundary conditions}\label{s6}

In that section, we verify that the extra regularity \eqref{2.stest} is available for energy solutions in the case of Dirichlet boundary conditions as well. In order to avoid the technicalities, we pose slightly stronger than \eqref{1.f} conditions on the nonlinearity $f$, namely, we assume that $f$ satisfies the following conditions:
\begin{equation}\label{6.2}
\begin{cases}
1.\  f\in C^1(\R,\R),\\
2.\ f'(u)\ge -C+\kappa|u|^4,\ \ |f'(u)|\le C(1+|u|^4),\\
3.\ f(-u)=f(u),
\end{cases}
\end{equation}
for some positive constants $C$ and $\kappa$. Note that, in contrast to \eqref{1.f}, assumptions \eqref{6.2} exclude the non-linearities $f$ with subcritical (less than quintic) growth rate. However, it does not look as a big restriction since the subcritical case is much easier and the desired extra regularity of energy solutions is  straightforward there.
\par
The following analogue of Theorem \ref{Th2.reg} is the main result of the section.

\begin{theorem}\label{Th6.2} Let the problem \eqref{OriginalSys} be equipped by Dirichlet boundary conditions, the non-linearity $f$ satisfy assumptions \eqref{6.2}, $\Omega$ be a smooth bounded domain, $\alpha,\gamma>0$, $g\in L^2(\Omega)$ and let $u(t)$ be a weak solution of problem \eqref{OriginalSys}. Then, $u\in L^2([0,T],H^{3/2}(\Omega))$ and the following estimate holds:
\begin{multline}\label{6.5}
\|u\|_{L^2([t,t+1], H^{3/2}(\Omega))}\le\\\le C\(1+\|\xi_u\|_{L^\infty([t,t+1],\Cal E)}+\|\Dt u\|_{L^2([0,T],H^{1/2}_{\Delta})}+\|g\|_{H^{-1/2}_{\Delta}}\)^3
\end{multline}
for some positive constant $C$ which is independent of $t$ and $u$.
\end{theorem}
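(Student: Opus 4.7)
The overall approach mirrors the proof of Theorem \ref{Th2.reg}. Formally, I multiply equation \eqref{OriginalSys} by $(-\Dx)^{1/2} u$ (in the Dirichlet spectral sense) and integrate over $\Omega$. After integration by parts, using that $u$ vanishes on $\partial\Omega$ and that $(-\Dx)^{1/2}$ is self-adjoint on $H^{1/2}_\Delta$, this produces the identity
$$\frac{d}{dt}\Phi(u(t),\Dt u(t)) + \|(-\Dx)^{3/4}u(t)\|_{L^2}^2 \le C\bigl(\|g\|_{H^{-1/2}_\Delta}^2 + \|u(t)\|_{H^1}^2 + \|\Dt u(t)\|_{H^{1/2}_\Delta}^2\bigr) - (f(u), (-\Dx)^{1/2}u)_\Omega,$$
where $\Phi(u,v)$ is an auxiliary functional bounded by $C\|\xi_u\|_{\Cal E}^2$, analogous to the one built in the periodic proof. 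Integration in $t\in[\tau,\tau+1]$ thus reduces the theorem to bounding the nonlinear contribution $-(f(u),(-\Dx)^{1/2}u)_\Omega$ from below. The formal multiplication will be justified, as in the periodic case, using the regularised bilinear forms $[\cdot,\cdot]_{1/2,\e}$ of Section \ref{s1} applied to the extension $\ext u$ described below, together with Lemma \ref{Lem1.Fatou} in the limit $\e\to 0$.

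The new ingredient is the \emph{odd extension trick}, since \eqref{1.ffrac} is unavailable on a general smooth domain. Let $\tilde u:=\ext u$ denote the odd extension provided by Appendix \ref{s7}, living on a suitable torus $\Bbb T^3\supset\Omega$ and satisfying $\|\tilde u\|_{H^s(\Bbb T^3)}\sim\|u\|_{H^s_\Delta(\Omega)}$ together with the commutation relation $(-\Dx)^{1/2}_{\Bbb T^3}\tilde u\big|_{\Omega}=(-\Dx)^{1/2}_\Omega u$. Because $f$ is even by \eqref{6.2}, $f(\tilde u)$ agrees with the \emph{even} extension of $f(u)$; consequently
$f(\tilde u) = 2f(u)\mathbf{1}_\Omega - \overline{f(u)}$,
where $\overline{f(u)}$ is the odd extension of $f(u)$. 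Parity of $\tilde u$ and $(-\Dx)^{1/2}\tilde u$ then yields $(f(\tilde u),(-\Dx)^{1/2}\tilde u)_{\Bbb T^3}=0$, whence
$(f(u),(-\Dx)^{1/2}u)_\Omega = \tfrac12(\overline{f(u)}, (-\Dx)^{1/2}\tilde u)_{\Bbb T^3}$.
Using the torus representation \eqref{1.bigood} this in turn becomes a symmetric double integral of difference quotients of $\overline{f(u)}$ and $\tilde u$ over $\Bbb T^3\times\R^3$, which is now amenable to pointwise analysis.

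Splitting the range of $h$ into the ``diagonal'' regime, where $x$ and $x+h$ lie in the same reflected copy of $\Omega$, and the ``cross'' regime, where they lie in distinct copies, the diagonal part reduces to the familiar form $(f(u(x+h))-f(u(x)))(u(x+h)-u(x))|h|^{-4}$, bounded below by $-K|u(x+h)-u(x)|^2|h|^{-4}$ thanks to $f'\ge -K$; summing gives a contribution $\ge -C\|u\|_{H^{1/2}(\Omega)}^2 \ge -C\|\xi_u\|_{\Cal E}^2$, as in \eqref{1.ffrac}. The main obstacle is the \emph{cross} contribution, where $\overline{f(u)}(x+h)-\overline{f(u)}(x)$ and $\tilde u(x+h)-\tilde u(x)$ become sums rather than differences of values of $f(u)$ and $u$ at reflected points, and the sign trick fails. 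I would estimate this term by H\"older's inequality with exponents $(6/5,6)$: the growth assumption \eqref{6.2} together with $H^1\hookrightarrow L^6$ yields $\|f(u)\|_{L^{6/5}(\Omega)}\le C(1+\|u\|_{H^1}^5)$, while the remaining $L^6$-factor coming from $\tilde u$ is controlled by $C\|u\|_{H^1}$ using the vanishing of $u$ on $\partial\Omega$ and the regularity properties of $\ext$ from Appendix \ref{s7} (which tame the kernel $|h|^{-4}$ in the boundary layer). The product is of order $\|u\|_{H^1}^6$; after time-integration on $[\tau,\tau+1]$ and Young's inequality to absorb $\|(-\Dx)^{3/4}u\|^2$ on the left, this delivers the cubic estimate \eqref{6.5}. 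The hardest technical step is precisely the H\"older estimate of the cross contribution, which requires the fine comparison of $|h|$ with the distance to $\partial\Omega$ supplied by Appendix \ref{s7}.
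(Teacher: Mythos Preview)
Your plan has a structural gap at its foundation: the claimed commutation relation $(-\Dx)^{1/2}_{\Bbb T^3}\tilde u\big|_{\Omega}=(-\Dx)^{1/2}_\Omega u$ is false for a general smooth domain. The odd extension $\ext$ of Appendix~\ref{s7} is only defined on the tubular neighbourhood $\Omega_\delta$, not on any torus containing $\Omega$, and the boundary reflection $R_\Omega$ is not a global isometry of a torus, so your parity argument (pairing an even function with an odd one to get zero) has no meaning there. Relatedly, the spectral Dirichlet operator $(-\Dx)^{1/2}_\Omega$ simply does not admit the difference--quotient representation \eqref{1.bigood}, which is why the paper never multiplies by $(-\Dx)^{1/2}_\Omega u$. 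Instead, the paper extends the \emph{equation} itself (with the local Laplacian) via $\ext$ to $\Omega_\delta$, uses the commutator formula of Lemma~\ref{Lem6.4} to account for the error $\ext(\Dx u)-\Dx(\ext u)$, then cuts off to $\R^3$ and takes the $\R^3$ bilinear form $[\cdot,\cdot]$ of \eqref{6.18} with $\bar u=\psi\tilde u$. The commutator and cutoff terms are controlled because the coefficients $a_{ij}$ vanish on $\Omega$ and are $O(\eb)$ in the $\eb$--layer.

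A second gap is your treatment of the nonlinear boundary (``cross'') contribution. Your proposed H\"older bound $\|f(u)\|_{L^{6/5}}\|\tilde u\|_{L^6}$ does not absorb the singular kernel $|h|^{-4}$; nothing in Appendix~\ref{s7} supplies the ``fine comparison of $|h|$ with the distance to $\partial\Omega$'' you invoke. In the paper, the analogous cross terms come from the cutoff $\psi$, and their control uses the \emph{strong} coercivity $f'(u)\ge -C+\kappa|u|^4$ of \eqref{6.2} in an essential way (see \eqref{6.26}--\eqref{6.27}): the good diagonal term carries a factor $(1+|\tilde u(x)|+|\tilde u(x+h)|)^4$ that dominates the cross terms after Young's inequality, yielding $[\psi f(\tilde u),\psi\tilde u]\ge -C(1+\|u\|_{H^1}^6)$. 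Using only $f'\ge -K$, as you do, is not enough here. (Note also that assumption~3 of \eqref{6.2} is meant as $f(-u)=-f(u)$, i.e.\ $f$ odd, so that $\ext(f(u))=f(\tilde u)$; the paper's proof says this explicitly. Your reading of $f$ as even leads you down the wrong parity computation.)
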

\begin{proof}

We first rewrite equation \eqref{OriginalSys} as follows
\begin{equation}\label{6.13}
\Dt^2 u-\Dx(u+v)+f(u)=0,
\end{equation}
where $v(t):=\gamma(-\Dx)^{-1/2}\Dt u+\alpha(-\Dx)^{-1}\Dt u -(-\Dx)^{-1}g$. Then, due to estimate \eqref{2.dis}
\begin{multline}\label{6.14}
\|v\|^2_{L^2([t,t+1],H^{3/2}(\Omega))}\le\\\le C\(\|\xi_u\|_{L^\infty([t,t+1],\Cal E)}^2+\|\Dt u\|^2_{L^2([0,T],H^{1/2}_{\Delta})}+\|g\|_{H^{-1/2}_{\Delta}}^2\).
\end{multline}
Applying the extension operator $\ext$ to both sides of \eqref{6.13} and using \eqref{6.10} together with the fact that $f(u)$ is odd, we have
\begin{equation}\label{6.15}
\Dt^2\tilde u-\Dx(\tilde u+\tilde v)+f(\tilde u)=\tilde h,
\end{equation}
where $\tilde u:=\ext(u)$ and $\tilde v:=\ext(v)$ (see Appendix for the definition and properties of the operator $\ext$)  and
$$
\tilde h:=\sum_{i,j=1}^3\partial_{x_i}(a_{ij}(x)\partial_{x_j}(\tilde u+\tilde v))+\sum_{i=1}^3b_i(x)\partial_{x_i}(\tilde u+\tilde v),
$$
where $a_{ij}(x)$ and $b_i(x)$ are the same as in Lemma \ref{Lem6.4}, see Appendix.
\par
Moreover, due to Lemma \ref{Lem6.3} and Corollary \ref{Cor6.5} together with the growth restrictions on $f$, all terms in \eqref{6.15} are well defined as elements of $L^\infty(0,T;H^{-1}(\Omega_\delta))$.
Thus, we have extended equation \eqref{OriginalSys} initially defined in $\Omega$ to equation \eqref{6.15} which is defined in a larger domain  $\Omega_\delta$. As the next step,  we extend this equation to the whole space $\R^3$ by introducing the cut-off function $\psi(x)=\psi_\eb(x)$ such that
\begin{equation}\label{6.16}
\psi(x)=1,\ \ x\in\Omega_{\eb/2}\ \ \text{and}\ \ \psi(x)=0,\ \ x\notin\Omega_{\eb},
\end{equation}
where $\eb\ll\delta$ is a small parameter which will be fixed below and setting $\bar u=\psi\tilde u$, $\bar v=\psi\tilde v$. Then, these functions satisfy
\begin{equation}\label{6.17}
\Dt^2 \bar u-\Dx(\bar u+\bar v)+\psi f(\tilde u)=\bar h
\end{equation}
with
$$
\bar h:=\sum_{i,j=1}^3\partial_{x_i}(a_{ij}(x)\partial_{x_j}(\bar u+\bar v))+\sum_{i=1}^3\bar b_i(x)\partial_{x_i}(\tilde u+\tilde v)+\bar c(x)(\tilde u+\tilde v)
$$
for some $\bar b_i,\bar c\in L^\infty(\R^n)$  with the support in $\Omega_\eb$. Then, analogously to the space-periodic case,  see Lemma \ref{Lem1.nice}, we introduce the inner product
\begin{multline}\label{6.18}
[U,V]:=\int_{h\in\R^3}\int_{x\in\R^3}\frac{(U(x+h)-U(x))(V(x+h)-V(x))}{|h|^4}\,dx\,dh=\\=c(U,(-\Dx)^{1/2}_{\R^3}V)_{R^3}=
c((-\Dx)^{1/4}_{\R^3}U,(-\Dx)^{1/4}_{\R^3}V)_{\R^3},
\end{multline}
where $(U,V)_{\R^3}$ and $(-\Dx)_{\R^3}$ are the inner product and the Laplacian in the whole space respectively.
Then, obviously,
\begin{equation}\label{6.19}
\begin{cases}
1.\ \ |[U,V]|\le C\|U\|_{L^2(\R^3)}\|V\|_{H^1(\R^3)},\\
2.\ \ |[U,V]|\le C\|U\|_{H^{1/2}(\R^3)}\|V\|_{H^{1/2}(\R^3)},\\
3. \ \ \|U\|_{H^{1/2}(\R^3)}^2\sim \|U\|^2_{L^2(\R^3)}+[U,U].
\end{cases}
\end{equation}
For simplicity, we restrict ourselves to the formal derivation of estimate \eqref{6.5} which can be justified exactly as in the proof of Theorem \ref{Th2.reg} (important that all terms in \eqref{6.17} belong to $H^{-1}(\R^3))$ and, therefore, the approximated inner product $[\cdot,\cdot]_{1/2,\eb}$, see \eqref{1.es}, of the equation with $\bar u\in H^1(\R^3)$ is well-defined).
\par
As in the periodic case, we take inner product \eqref{6.18} of equation \eqref{6.17} with $\bar u$ and integrate over time interval $[t,t+1]$. After integration by parts this gives
\begin{multline}\label{6.20}
[\Dt \bar u(t+1),\bar u(t+1)]-[\Dt\bar u(t),\bar u(t)]-\int_{t}^{t+1}[\Dt\bar u(s),\Dt\bar u(s)]\,ds+\\+\int_t^{t+1}[\Nx\bar u(s),\Nx \bar u(s)]\,ds+\int_t^{t+1}[\Nx \bar v(s),\Nx\bar u(s)]\,ds+\\+\int_t^{t+1}[\psi f(\tilde u(s)),\bar u(s)]\,ds=\int_t^{t+1}[\bar h(s),\bar u(s)]\,ds.
\end{multline}
Thus, we only need to estimate the terms in \eqref{6.20}. First, due to Lemma \ref{Lem6.3},
\begin{multline}\label{6.21}
\|\Dt \bar u\|_{L^2(\R^3)}\le C\|\Dt u\|_{L^2(\Omega)},\\ \|\bar u\|_{H^1(\R^3)}\le C\|u\|_{H^1(\Omega)},\ \|\Dt \bar u\|_{H^{1/2}(\R^3)}\le C\|\Dt u\|_{H^{1/2}_{\Delta}}
\end{multline}
and, therefore, due to \eqref{6.18} and \eqref{2.dis}, first 3 terms in \eqref{6.20} is controlled by the energy norm of the solution $u$. Second, according to Lemma \ref{Lem6.3}, we also have
\begin{equation}\label{6.22}
\|\bar v\|_{H^{3/2}(\R^3)}\le C\|v\|_{H^{3/2}_{\Delta}}\le C(\|\Dt u\|_{H^{1/2}_{\Delta}}+\|g\|_{H^{-1/2}_{\Delta}})
\end{equation}
and, therefore, the 5th term in the left-hand side of \eqref{6.20} can be estimated as follows
\begin{multline}\label{6.23}
[\Nx\bar v,\Nx\bar u]\le \frac12[\Nx\bar u,\Nx\bar u]+\\+\frac12[\Nx\bar v,\Nx\bar v]\le \frac12[\Nx\bar u,\Nx\bar u]+C\|\Nx\bar v\|^2_{H^{1/2}(\R^3)}
\end{multline}
and since the $H^{1/2}(\R^3)$-norm of the gradient is controlled by the $H^{3/2}(\R^3)$-norm, the 5th term is also controlled by the 4th one and the energy norm of the solution $u(t)$.
\par
As the third step, we estimate the right-hand side of \eqref{6.20}. To this end, we note that all terms in $\bar h$ which does not contain second derivatives in space can be straightforwardly controlled by the energy norm of the solution using the first estimate of \eqref{6.19} and \eqref{6.21}.
Moreover, the terms which contain the second derivatives of $\bar v$ can be estimated analogously to the 5th term using the fact that $a_{ij}\in W^{1,\infty}$ are the multipliers in $H^{1/2}(\R^3)$, see also the estimate of the next term below. Thus, we only need to estimate the terms
\begin{multline*}
[a_{ij}\partial_{x_i}\bar u,\partial_{x_i}\bar u]=I_1+I_2:=\\=\int_{h\in\R^3}\int_{x\in\R^3}a_{ij}(x)\frac{(\partial_{x_i}\bar u(x+h)-\partial_{x_i}\bar u(x))(\partial_{x_j}\bar u(x+h)-\partial_{x_j}\bar u(x))}{|h|^4}\,dx\,dh+\\+\int_{h\in\R^3}\int_{x\in\R^3}\partial_{x_i}\bar u(x+h)\frac{(a_{ij}(x+h)-a_{ij}(x))(\partial_{x_j}\bar u(x+h)-\partial_{x_j}\bar u(x))}{|h|^4}\,dx\,dh.
\end{multline*}
To estimate $I_2$, we remind that $a_{ij}\in W^{1,\infty}(\R^n)$ and the integration in $h$ can be done for $|h|\le K$  only 
($K>0$ is fixed, the remaining integral over $|h|>K$ is controlled by the $L^2$-norm of $\Nx \bar u$), consequently, by the Cauchy-Schwartz inequality
\begin{multline*}
I_2\le \int_{|h|\le K}\int_{x\in\R^3}|\partial_{x_i}\bar u(x+h)|\frac{|\partial_{x_j}\bar u(x+h)-\partial_{x_j}\bar u(x)|}{|h|^3}\,dx\,dh+C_K\|\bar u\|_{H^1(\R^3)}^2\le\\
\le [\partial_{x_j}\bar u,\partial_{x_j}\bar u]^{1/2}\(\int_{|h|\le K}\int_{x\in\R^3}\frac{|\partial_{x_i}\bar u(x+h)|^2}{|h|^2}\,dx\,dh\)^{1/2}+C_K\|\bar u\|^2_{H^1(\R^3)}\le\\\le
C[\partial_{x_j}\bar u,\partial_{x_j}\bar u]^{1/2}\|\bar u\|_{H^1(\R^3)}+C_K\|\bar u\|^2_{H^1(\R^3)}.
\end{multline*}
Thus, this term is controlled by the 4th term of the left-hand side of \eqref{6.20} and the energy norm. To estimate $I_1$, we first note that, obviously,
\begin{multline*}
I_1\le \int_{|h|<\eb}\int_{x\in\R^3}a_{ij}(x)\frac{(\partial_{x_i}\bar u(x+h)-\partial_{x_i}\bar u(x))(\partial_{x_j}\bar u(x+h)-\partial_{x_j}\bar u(x))}{|h|^4}\,dx\,dh+\\+C_\eb\|\bar u\|^2_{H^1(\R^3)}.
\end{multline*}
To estimate the first integral, we remind that $a_{ij}(x)=0$ if $x\in\Omega$ and both $\Nx\bar u(x+h)$ and $\Nx\bar u(x)$ equal to zero if $x\notin\Omega_{2\eb}$ (here we used the definition \eqref{6.16} of the cut-off function $\psi$ and the restriction $|h|<\eb$). Thus, the integrand is non-zero when $x$ belongs to the $2\eb$-layer $\Omega_{2\eb}\backslash\Omega$ only. Since $a_{ij}\in W^{1,\infty}$ and $a_{ij}\big|_{\partial\Omega}=0$, they are of order $\eb$ in that layer. Thus,
$$
I_1\le C\eb[\Nx \bar u,\Nx\bar u]+C_\eb\|\bar u\|^2_{H^1(\R^3)}.
$$
Combining the obtained estimates, we see that
\begin{multline}\label{6.24}
\int_t^{t+1}[\bar h(s),\bar u(s)]\,ds\le C\eb\int_t^{t+1}[\Nx\bar u(s),\Nx\bar u(s)]\,ds+\\+C_\eb\int_t^{t+1}(\|\bar u(s)\|^2_{H^1}+\|\bar v(s)\|^2_{H^{3/2}})\,ds.
\end{multline}
Thus, all the terms in \eqref{6.20} except of the one containing the nonlinearity are estimated. Inserting the obtained estimates to the equality \eqref{6.20} and fixing $\eb$ to be small enough, we end up with the estimate
\begin{multline}\label{6.25}
\int_t^{t+1}[\Nx \bar u(s),\Nx\bar u(s)]\,ds+\int_t^{t+1}[\psi f(\tilde u(s)),\psi\tilde u(s)]\,ds\le \\ \le
C\(\|\xi_u\|^2_{L^\infty([t,t+1],\Cal E)}+\|\Dt u\|^2_{L^2([t,t+1],H^{1/2}_{\Delta})}+\|g\|^2_{H^{-1/2}_{\Delta}}\).
\end{multline}
Therefore, we only need to estimate the second term in the left hand side of \eqref{6.25}. Note that, by adding the linear term to $f$, we may assume without loss of generality that $f'(u)\ge \kappa (1+u^4$) and, consequently,
\begin{equation}\label{6.26}
(f(a)-f(b))(a-b)\ge \beta(1+|a|+|b|)^4(a-b)^2.
\end{equation}
On the other hand,
\begin{equation}\label{6.27}
|(f(a)-f(b)|\le C(1+|a|+|b|)^4|a-b|,\ \ |f(a)|\le C(1+|a|^5).
\end{equation}
Using these formulas and the fact that $\psi$ is smooth, we get
\begin{multline*}
\biggl(\psi(x+h)f(\tilde u(x+h))-\psi(x)f(\tilde u(x))\biggr)\biggl(\psi(x+h)\tilde u(x+h)-\psi(x)\tilde u(x)\biggr)=\\=
\biggl(\psi(x+h)(f(\tilde u(x+h))-f(\tilde u(x)))+f(\tilde u(x))(\psi(x+h)-\psi(x))\biggr)\times\\\times\biggl(\psi(x+h)(\tilde u(x+h)-\tilde u(x))+\tilde u(x)(\psi(x+h)-\psi(x))\biggr)\ge\\\ge
\beta\psi^2(x+h)\biggl(1+|\tilde u(x+h)|+|\tilde u(x)|\biggr)^4\biggl(\tilde u(x+h)-\tilde u(x)\biggr)^2-\\-C |h|\psi(x+h)\biggl(|\tilde u(x)||f(\tilde u(x+h))-f(\tilde u(x))|+|f(\tilde u(x))||\tilde u(x+h)-\tilde u(x)|\biggr)-\\-C|h|^2|f(\tilde u(x))\tilde u(x)|\ge\\\ge
\beta\psi^2(x+h)\biggl(1+|\tilde u(x+h)|+|\tilde u(x)|\biggr)^4\biggl(\tilde u(x+h)-\tilde u(x)\biggr)^2-\\-
C|h|\psi(x+h)\biggl(1+|\tilde u(x+h)|+|\tilde u(x)|\biggr)^5\biggl|\tilde u(x+h)-\tilde u(x)\biggr|-\\-
C|h|^2\biggr(1+|\tilde u(x+h)|+|\tilde u(x)|\biggr)^6\ge\\\ge
\frac{\beta}2\psi^2(x+h)\biggl(1+|\tilde u(x+h)|+|\tilde u(x)|\biggr)^4\biggl(\tilde u(x+h)-\tilde u(x)\biggr)^2-\\-
C_1|h|^2\biggl(1+|\tilde u(x+h)|^6+|\tilde u(x)|^6\biggr).
\end{multline*}
That estimates give
$$
[\psi f(\tilde u),\psi\tilde u]\ge -C_2(1+\|\tilde u\|^6_{L^6(\Omega_\delta)})\ge -C_3(1+\|\tilde u\|_{H^1(\Omega_\delta)}^6)\ge -C_4(1+\|u\|^6_{H^1(\Omega)}).
$$
Inserting this estimate to \thetag{25}, we end up with
\begin{multline}\label{6.28}
\int_t^{t+1}[\Nx \bar u(s),\Nx\bar u(s)]\,ds\le \\ \le
C\(\|\xi_u\|^2_{L^\infty([t,t+1],\Cal E)}+\|\Dt u\|^2_{L^2([t,t+1],H^{1/2}_{\Delta})}+\|g\|^2_{H^{-1/2}_{\Delta}}\)^3.
\end{multline}
Estimate \eqref{6.5} is an immediate corollary of this estimate and the obvious fact that
$$
\|u\|^2_{H^{3/2}(\Omega)}\le \|u\|^2_{H^1(\Omega)}+\int_{\Omega}\int_{\Omega}\frac{|\Nx u(x)-\Nx u(y)|^2}{|x-y|^4}\,dx\,dy\le \|u\|^2_{H^1(\Omega)}+[\Nx\bar u,\Nx\bar u].
$$
Thus, Theorem \ref{Th6.2} is proved.
\end{proof}
\begin{remark} As it has been shown before, the extra regularity of energy solutions established in Theorem \ref{Th6.2} is enough to verify the well-posedness of energy solutions for problem \eqref{OriginalSys} as well as their dissipativity, smoothing property and the existence of finite-dimensional global and exponential attractors. Thus, these results are proved for the case of Dirichlet boundary conditions under the assumptions of Theorem \ref{6.2}.
\end{remark}
\section{Appendix. Properties of the extension operator}\label{s7}

The aim of this appendix is to  define and study the odd extension operator for functions defined in a smooth bounded domain of $\R^3$ which vanish at the boundary $\partial\Omega$. This operator is a crucial technical tool for proving the additional regularity of energy solutions for the case of Dirichlet boundary conditions.
\par
 Namely, since $\Omega$ is smooth, any  point $x$ in the small $\delta$ neighborhood $\Cal O_\delta(\partial\Omega)$ of $\partial\Omega$ can be presented in a unique way the form
\begin{equation}\label{6.6}
x=x'+s\vec n,
\end{equation}
where $x'\in\partial\Omega$, $\vec n$ is a normal to $\partial\Omega$ at $x'$ and $s\in(-\delta,\delta)$. Thus, \eqref{6.6} realizes a diffeomorphism
of $\Cal O_\delta(\partial\omega)$ and $\partial\Omega\times(-\delta,\delta)$ and we will treat the pair $(x',s)$ as coordinates in the neighborhood $\Cal O_\delta(\partial\Omega)$ of the boundary.
\par
In that coordinates the reflection $R_\Omega$ with respect to the boundary reads
\begin{equation}\label{6.7}
R_\Omega:\Cal O_\delta(\partial\Omega)\to\Cal O_\delta(\partial\Omega),\ \ (x',s)\to(x',-s)
\end{equation}
which corresponds to the $C^\infty$-map $y=R_\Omega(x)$ in the initial coordinates. Note also that $R_\Omega(x)$ maps points which are {\it inside} of $\Omega$ to the points {\it outside} of $\Omega$ and
$$
R_{\Omega}(R_{\Omega}(x))\equiv x.
$$
 Moreover, for any $x\in\partial\Omega$, that the derivative $R'_\Omega(x)$ is the usual (linear) {\it reflection} with respect to the tangent plane to $\partial\Omega$ at $x$.
\par
We define the desired extension operator $\ext$ as follows:
\begin{equation}\label{6.8}
\ext(u)(x):=\begin{cases} u(x),\ \ &x\in\Omega,\\ -u(R_\Omega(x)),\ \ &x\in \Cal O_\delta(\Omega)\backslash\Omega.\end{cases}
\end{equation}
The next lemma shows that $\ext$ defines indeed a proper extension of functions $H^s_{\Delta}$, $-1\le s\le 2$ outside of $\Omega$.

\begin{lemma}\label{Lem6.3} Let $\Omega$ be a smooth domain. Then $\ext$ is a linear continuous operator
\begin{equation}\label{6.9}
\ext: H^s_{\Delta}\to H^s(\Omega_\delta)
\end{equation}
for all $-1\le s\le2$ (here and below, $\Omega_\delta:=\{x\in\R^3, \dist(x,\Omega)<\delta\}$).
\end{lemma}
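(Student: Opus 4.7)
\emph{Proof plan.} My strategy is to verify the claim directly at the integer endpoints $s\in\{-1,0,1,2\}$ and then use complex interpolation to cover the intermediate fractional values. Everything reduces, via the straightening coordinates $(x',s)$ from \eqref{6.6}, to the model situation of odd reflection across the flat hyperplane $\{s=0\}$, where the argument is transparent.

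First I would handle the three ``positive'' endpoints. At $s=0$ boundedness is immediate: $R_\Omega$ is a $C^\infty$ diffeomorphism with bounded Jacobian, so a change of variables gives $\|\ext u\|_{L^2(\Omega_\delta)}\le C\|u\|_{L^2(\Omega)}$. At $s=1$, the identification $H^1_\Delta=H^1_0(\Omega)$ says that the trace of $u$ on $\partial\Omega$ vanishes, so the two pieces of $\ext u$ agree at the interface and $\ext u\in H^1(\Omega_\delta)$ with the expected norm bound. At $s=2$, I would use $H^2_\Delta=H^2(\Omega)\cap H^1_0(\Omega)$: in the flat model the tangential derivatives of $\ext u$ are the odd reflection of tangential derivatives of $u$ (which still vanish on the interface, hence glue continuously), while the normal derivative of $\ext u$ is the \emph{even} reflection of $\partial_\nu u$ and is automatically continuous across $\{s=0\}$. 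Hence all first-order derivatives of $\ext u$ are continuous across $\partial\Omega$ and each half is $H^2$, so $\ext u\in H^2(\Omega_\delta)$; transferring back to Euclidean coordinates preserves the estimate since the coordinate change is smooth.

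Second, I would interpolate on $[0,2]$. The spectral definition \eqref{1.hsd} yields $[H^0_\Delta,H^2_\Delta]_\theta=H^{2\theta}_\Delta$ (complex interpolation of powers of a self-adjoint operator), and on the smooth domain $\Omega_\delta$ one has $[L^2(\Omega_\delta),H^2(\Omega_\delta)]_\theta=H^{2\theta}(\Omega_\delta)$ by standard Sobolev interpolation. This gives $\ext\colon H^s_\Delta\to H^s(\Omega_\delta)$ for every $s\in[0,2]$, covering in particular the ``awkward'' value $s=1/2$ where $H^{1/2}_\Delta$ is a proper subspace of $H^{1/2}(\Omega)$.

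Third, I would handle $s\in[-1,0]$ by duality. For smooth $u$ and $\phi\in C_0^\infty(\Omega_\delta)$, the change of variables $y=R_\Omega(x)$ on the exterior collar gives
\begin{equation*}
(\ext u,\phi)_{\Omega_\delta}=(u,T\phi)_{\Omega},\qquad T\phi(x):=\phi(x)-J(x)\,\phi(R_\Omega(x)),
\end{equation*}
where $J$ is the absolute Jacobian of the reflection (and $\phi\circ R_\Omega$ is extended by zero outside its natural domain). Since $R_\Omega|_{\partial\Omega}=\mathrm{id}$ and $R_\Omega'|_{\partial\Omega}$ is an orthogonal reflection in the tangent plane, we have $J|_{\partial\Omega}=1$, so $T\phi$ vanishes on $\partial\Omega$; together with the elementary bound $\|T\phi\|_{H^1(\Omega)}\le C\|\phi\|_{H^1(\Omega_\delta)}$ this shows $T\colon H^1_0(\Omega_\delta)\to H^1_0(\Omega)=H^1_\Delta$ continuously. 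Using $(H^1_\Delta)^*=H^{-1}_\Delta$ and $(H^1_0(\Omega_\delta))^*=H^{-1}(\Omega_\delta)$, dualising yields $\ext\colon H^{-1}_\Delta\to H^{-1}(\Omega_\delta)$, and interpolating with the $s=0$ bound closes the gap $s\in[-1,0]$. The main obstacle is the $s=2$ endpoint, where one must be careful that odd reflection across the \emph{curved} boundary does not introduce spurious jumps in first-order derivatives; this is exactly why one straightens the boundary via \eqref{6.6} and works on the flat model. Once the endpoint bounds are in place, the rest is standard interpolation/duality bookkeeping.
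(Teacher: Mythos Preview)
Your proposal is correct and follows essentially the same route as the paper: verify boundedness at the integer endpoints $s=0,1,2$ directly (using that the odd reflection matches first derivatives across $\partial\Omega$ since $u\big|_{\partial\Omega}=0$), fill in $[0,2]$ by interpolation, then handle $s=-1$ by the duality/adjoint computation $(\ext u,\phi)_{\Omega_\delta}=(u,T\phi)_\Omega$ with $T\phi=\phi-|\det R'_\Omega|\,\phi\circ R_\Omega$ and $|\det R'_\Omega|=1$ on $\partial\Omega$, and interpolate on $[-1,0]$. The only cosmetic difference is that you pass to the flat model via \eqref{6.6} to check the $s=2$ case, whereas the paper argues directly with $R_\Omega$; the content is the same.
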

\begin{proof}  Indeed, let $u\in H^2(\Omega)\cap H^1_0(\Omega)$. Then, since $u\big|_{\partial\Omega}=0$, we have
$$
u\big|_{\partial\Omega}=\ext(u)\big|_{\partial\Omega}=0.
$$
Moreover, since the extension is odd,
$$
\partial_{\vec n} u\big|_{\partial\Omega}=\partial_{\vec n}\ext(u)\big|_{\partial\Omega}.
$$
Therefore, there are no jumps of first derivatives on $\partial\Omega$ and, consequently, no singular
 parts for the second derivatives as well. Thus, $\ext(u)\in H^2(\Omega_\delta)$ and \eqref{6.9} is proved for $s=2$. For $s=1$ and $s=0$ it is evident and for the non-integer exponents $s$ it follows then by interpolation. Thus, \eqref{6.9} is verified for non-negative $s\in[0,2]$. To prove it for $s\in[-1,0)$, we take any $u\in C_0^\infty(\Omega)$ and $\varphi\in C_0^\infty(\Omega_\delta)$ and use the following identity
 \begin{multline*}
 \int_{\Omega_\delta}\ext(u)\varphi\,dx=\int_\Omega u(x)\varphi(x)\,dx-\int_{\Omega_\delta\backslash\Omega}u(R_{\Omega}(x))\varphi(x)\,dx=
 \int_\Omega u(x)\varphi(x)\,dx-\\-\int_{\Omega}u(y)\varphi(R_{\Omega}(y))|\det R'_{\Omega}(y)|\,dy=\\=\int_{\Omega}u(x)(\varphi(x)-|\det R'_{\Omega}(x)|\varphi(R_{\Omega}(x)))\,dx:=\int_\Omega u{\ext}^*(\varphi)\,dx,
 \end{multline*}
 where, by definition, $\varphi(R_{\Omega}(x))\equiv0$ if $x\notin\Cal O_\delta(\partial \Omega)$. This identity shows that $\ext^*(\varphi)\in H^1_0(\Omega)$
 (since $|\det R'_{\Omega}(x)|=1$ when $x\in\partial\Omega$) and
 $$
 \|\ext^*(\varphi)\|_{H^1_0(\Omega)}\le C\|\varphi\|_{H^1_0(\Omega_\delta)}.
 $$
 Thus, by density arguments, $\ext$ is a linear continuous operator from $H^{-1}(\Omega)$ to $H^{-1}(\Omega_\delta)$. For non-integer $s\in(-1,0)$, \eqref{6.9} follows again by interpolation and Lemma \ref{Lem6.3} is proved.
\end{proof}
The next  lemma which gives the expression for the commutator of $\ext$ and the Laplacian is crucial for what follows.

\begin{lemma}\label{Lem6.4} Let $\Omega$ be a smooth domain and let $u\in C^\infty(\overline{\Omega})$ be a smooth function satisfying $u\big|_{\partial\Omega}=0$. Then
\begin{equation}\label{6.10}
\ext(\Dx u)-\Dx(\ext(u))=\sum_{i,j=1}^3\partial_{x_i}(a_{ij}(x)\partial_{x_j}\ext(u))+\sum_{i=1}^3b_i(x)\partial_{x_i}\ext u,
\end{equation}
where $a_{ij}\in W^{1,\infty}(\Omega_\delta)$, $b_i\in L^\infty(\Omega_\delta)$ and
\begin{equation}\label{6.11}
a_{ij}\big|_{\Omega}\equiv0.
\end{equation}
\end{lemma}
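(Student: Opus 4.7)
The plan is to compute the commutator explicitly via the chain rule applied to the definition \eqref{6.8} of $\ext$, and then repackage the outcome in divergence form. Since $\ext(u) = u$ on $\Omega$, the commutator $\ext(\Dx u) - \Dx \ext(u)$ vanishes identically on $\Omega$; so I set $a_{ij} \equiv 0$ and $b_i \equiv 0$ on $\Omega$ and work only on the outer layer $\Omega_\delta\setminus\Omega$. On this layer, $\ext(u)(x) = -u(F(x))$ with $F := R_\Omega$ and $y := F(x) \in \Omega$. Writing out the chain rule,
\[
\Dx_x (u\circ F)(x) = \sum_{k,\ell} A^{k\ell}(x)(\partial^2_{y_ky_\ell}u)(y) + \sum_k (\Dx F^k)(x)(\partial_{y_k}u)(y),
\]
where $A^{k\ell}(x) := \sum_i (\partial_{x_i}F^k)(\partial_{x_i}F^\ell)$, i.e.\ $A = F'(F')^T$. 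Subtracting this from $\ext(\Dx u)(x) = -(\Dx u)(y) = -\delta^{k\ell}(\partial^2_{y_ky_\ell}u)(y)$ gives, on the layer,
\[
\ext(\Dx u) - \Dx\ext(u) = \sum_{k,\ell}\bigl(A^{k\ell}(x)-\delta^{k\ell}\bigr)(\partial^2_{y_ky_\ell}u)(y) + \sum_k (\Dx F^k)(x)(\partial_{y_k}u)(y).
\]

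Next, I convert the $y$-derivatives of $u$ into $x$-derivatives of $\ext(u)$. From the first-order relation $\nabla_x \ext(u) = -(F')^T \nabla_y u$, inversion by the smooth matrix $G := (F')^{-1}$ gives $(\partial_{y_k}u)(y)$ as a smooth linear combination of $\partial_{x_j}\ext(u)(x)$. Differentiating once more and solving, $(\partial^2_{y_ky_\ell}u)(y)$ equals $-\sum_{i,j} G_{jk}(x)G_{i\ell}(x)\,\partial^2_{x_ix_j}\ext(u)(x)$ plus a smooth linear combination of $\partial_{x_i}\ext(u)$. Substitution yields the commutator in non-divergence form
\[
\sum_{i,j} B_{ij}(x)\,\partial^2_{x_ix_j}\ext(u) + \sum_i \tilde b_i(x)\,\partial_{x_i}\ext(u),
\]
with $B_{ij}(x) = -\sum_{k,\ell}\bigl(A^{k\ell}(x)-\delta^{k\ell}\bigr)G_{jk}(x)G_{i\ell}(x)$ and $\tilde b_i \in C^\infty(\overline{\Omega_\delta\setminus\Omega})$. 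The principal part is then put into the desired divergence form via $B_{ij}\partial^2_{x_ix_j}\ext(u) = \partial_{x_i}(B_{ij}\,\partial_{x_j}\ext(u)) - (\partial_{x_i}B_{ij})\partial_{x_j}\ext(u)$, absorbing the extra first-order piece into $b_i$.

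The crux of the whole argument, and the only place where the boundary geometry is used, is the identity $A^{k\ell}(x) = \delta^{k\ell}$ for $x \in \partial\Omega$. This is precisely the observation, stated after \eqref{6.7}, that $R_\Omega'(x)$ restricted to $\partial\Omega$ is the linear reflection with respect to the tangent plane, hence orthogonal, so $A = F'(F')^T = I$ on $\partial\Omega$. Because $A^{k\ell}-\delta^{k\ell}$ and $G$ are smooth on the closed layer, the coefficient $B_{ij}$ is smooth there and vanishes on $\partial\Omega$. Extending $a_{ij}(x) := B_{ij}(x)$ by zero across $\partial\Omega$ into $\Omega$ therefore produces a function in $W^{1,\infty}(\Omega_\delta)$ with $a_{ij}|_\Omega \equiv 0$, which is exactly the required \eqref{6.11}. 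The first-order coefficient absorbs the geometric term $\Dx F^k$, the correction produced by $\partial_{x_i}B_{ij}$, and the lower-order contribution from the inversion of the second-order chain rule; each of these is bounded on the closed layer, so after extension by zero into $\Omega$ one obtains $b_i \in L^\infty(\Omega_\delta)$.

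The main obstacle I anticipate is bookkeeping: one has to keep track of several layers of chain rule corrections and make sure that the pieces of $\tilde b_i$ stemming from the inverted second-order relation, from $\Dx F^k$, and from commuting $B_{ij}$ with $\partial_{x_i}$ all remain merely bounded (not forcing cancellations that would demand extra vanishing on $\partial\Omega$). This is automatic once $A - I$ is seen to be smooth and vanishing on $\partial\Omega$, which is the structural input that propagates through the computation.
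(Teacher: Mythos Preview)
Your proposal is correct and follows essentially the same approach as the paper: both compute $\Dx(u\circ R_\Omega)$ by the chain rule, observe that the second-order error coefficients vanish on $\partial\Omega$ because $R_\Omega'$ is an orthogonal reflection there, and then rewrite everything in terms of $\ext(u)$. The paper records only the intermediate identity \eqref{6.12} with coefficients $\tilde a_{ij}$ multiplying $(\partial_{x_i}\partial_{x_j}u)(R_\Omega(x))$ and declares the rest ``an immediate corollary''; you have simply spelled out that immediate corollary---the inversion via $G=(F')^{-1}$ to pass from $y$-derivatives of $u$ to $x$-derivatives of $\ext(u)$, and the passage to divergence form---in full.
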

\begin{proof}  Indeed, since $R'_\Omega(x)$ is a reflection and the reflections preserve the Laplacian, we have
\begin{multline}\label{6.12}
\Dx (u(R_\Omega(x)))=(\Dx u)(R_\Omega(x))+\sum_{ij}\tilde a_{ij}(x)(\partial_{x_i}\partial_{x_j}u)(R_\Omega(x))+
\\+\sum_{i}\tilde b_i(x)(\partial_{x_i}u)(R_\Omega(x)),
\end{multline}
where $\tilde a_{ij}\big|_{\partial\Omega}=0$ and the assertion of the lemma is an immediate corollary of \eqref{6.12} and the definition of the extension operator $\ext$. Lemma \ref{Lem6.4} is proved.
\end{proof}
\begin{cor}\label{Cor6.5} Formula \eqref{6.10} remains valid if $u\in H^s_\Delta$, $1\le s\le2$.
\end{cor}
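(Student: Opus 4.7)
The plan is to extend the identity \eqref{6.10} from smooth boundary-vanishing functions to all of $H^s_\Delta$ for $s\in[1,2]$ via a density and continuity argument. The natural target space for the identity is $H^{s-2}(\Omega_\delta)$ with $s-2\in[-1,0]$, and the key observation is that every term on either side depends continuously on $u\in H^s_\Delta$ with values in that space, so once the identity holds on a dense subclass it holds everywhere.

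First I would identify a suitable dense subclass. By \eqref{1.fgood}, for $1\leq s\leq 2$ the space $H^s_\Delta$ consists of functions in $H^s(\Omega)$ satisfying the single trace condition $u|_{\partial\Omega}=0$ (since $[(s-1)/2]=0$ throughout this range). Standard approximation theory on smooth domains then shows that
$$
\mathcal D:=\{u\in C^\infty(\overline{\Omega}):u|_{\partial\Omega}=0\}
$$
is dense in $H^s_\Delta$. For any $u\in H^s_\Delta$, pick $u_n\in\mathcal D$ with $u_n\to u$ in $H^s_\Delta$; Lemma \ref{Lem6.4} applies to each $u_n$ and yields \eqref{6.10} pointwise.

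Next I would verify the continuity of each term in $u\in H^s_\Delta$ with values in $H^{s-2}(\Omega_\delta)$. For the left-hand side: Lemma \ref{Lem6.3} at exponent $s$ gives $\ext(u_n)\to\ext(u)$ in $H^s(\Omega_\delta)$, hence $\Dx\ext(u_n)\to\Dx\ext(u)$ in $H^{s-2}(\Omega_\delta)$. Since $\Dx$ is bounded $H^s_\Delta\to H^{s-2}_\Delta$, and Lemma \ref{Lem6.3} also applies at the exponent $s-2\in[-1,0]$, we get $\ext(\Dx u_n)\to\ext(\Dx u)$ in $H^{s-2}(\Omega_\delta)$. For the right-hand side: $\partial_{x_j}\ext(u_n)\to\partial_{x_j}\ext(u)$ in $H^{s-1}(\Omega_\delta)$, and multiplication by $a_{ij}\in W^{1,\infty}(\Omega_\delta)$ is bounded on $H^{s-1}(\Omega_\delta)$ for $s-1\in[0,1]$ (by interpolation between the trivial cases $s-1=0,1$), so $a_{ij}\partial_{x_j}\ext(u_n)$ converges in $H^{s-1}(\Omega_\delta)$ and its divergence in $H^{s-2}(\Omega_\delta)$. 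The lower-order term with $b_i\in L^\infty(\Omega_\delta)$ is handled in the same way (multiplication by $L^\infty$ is bounded on $H^{s-1}$ for $s-1\in[0,1]$ by the same interpolation argument, using that $L^\infty\cdot L^2\subset L^2$ and $L^\infty\cdot H^1\subset H^1$ trivially holds since $b_i$ need not be smooth but the product is well-defined in $L^2$ and only one derivative is needed to land in $H^{s-2}$). Passing to the limit in \eqref{6.10} written for $u_n$ yields the identity for $u$.

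The only potentially delicate points are the density of $\mathcal D$ in $H^s_\Delta$ for the half-integer value $s=3/2$, where one might worry about an extra term in the norm of type \eqref{1.fbad}, and the multiplier property on $H^{s-1}(\Omega_\delta)$; both are classical for smooth domains and for the present exponent range, so I do not expect any real obstacle. The argument is essentially a routine extension by continuity once the correct ambient space $H^{s-2}(\Omega_\delta)$ has been identified.
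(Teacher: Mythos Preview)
Your proposal is correct and follows exactly the paper's own approach: the paper's proof is a one-line invocation of density of $C^\infty(\overline{\Omega})\cap H^1_0(\Omega)$ in $H^s_\Delta$ together with the fact that both sides of \eqref{6.10} make sense (and depend continuously) on $H^s_\Delta$, which is precisely what you spell out in detail. The only minor slip is the parenthetical claim that $L^\infty\cdot H^1\subset H^1$, which is false for general $b_i\in L^\infty$; but you immediately supply the correct fix, namely that $b_i\,\partial_{x_i}\ext(u)\in L^2(\Omega_\delta)\hookrightarrow H^{s-2}(\Omega_\delta)$ continuously, and that is all that is needed.
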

Indeed, the assertion of the corollary follows by the standard density arguments from the facts that $C^\infty(\overline{\Omega})\cap H^1_0(\Omega)$ is dense in $H^s_{\Delta}$ and that both right and left hand sides have sense for $u\in H^s_\Delta$ for that values of $s$.

\end{document}